\tikzset{
 dot/.style={draw, circle, inner sep=0.5pt, fill=blue}
}
\NewDocumentCommand \IfValueInList {mmmm}{
\clist_if_in:nVTF {#2} #1 {#3} {#4}
}
\DeclareDocumentCommand\longpath{m m m g g}{
\def \n {#3}  \def \startn {#1}  \def \endn {#2}
\IfNoValueTF{#5}{\def \p {p}}{\def \p {#5} }
\foreach \v in {0,...,\n} {\coordinate (\p\v) at ($(\startn)!\v/\n!(\endn)$);}
\IfNoValueTF{#4}
{\foreach \x [evaluate=\x as \y using int(\x-1)] in {1,...,\n}{\draw[black](\p\y) node {}--(\p\x) node {};}}
{\foreach \x [evaluate=\x as \y using int(\x-1)] in {1,...,\n}{
\IfValueInList \x {#4}
{\foreach \v in {4,5,6}{\fill ($ (\p\y)!\v/10! (\p\x) $) circle (0.5pt);}}
{\draw[black](\p\y) node {}--(\p\x) node {};}
}}}
\DeclareDocumentCommand\nstar{o m m m m g g}{
\def \o {#2}  \def \n {#3}  \def \deg {#4}  \def \len {#5}
\IfNoValueTF{#1}{\def \rot {0}}{\def \rot {#1} }
\IfNoValueTF{#7}{\def \p {p}}{\def \p {#7} }
\foreach \x in {0,1,...,\n} {\coordinate[rotate around={-0.5*\deg+\rot:(\o)}](\p\x) at ($(\o)+({\deg * \x/\n}:\len)$);}
\foreach \x[evaluate=\x as \y using int(\x+1),evaluate=\x as \z using int(\x-1)] in {0,1,...,\n}{
  \IfValueInList \x {#6}{\foreach \v in {2,4,6}{
  \fill[black] ($ (\p\z)!\v/8! (\p\y) $) circle (0.5pt);
   }}
  {\draw (\o) node {}-- (\p\x)node{};}
}
}
\DeclareDocumentCommand\star{o m m m m g g}{
\def \o {#2}  \def \n {#3}  \def \deg {#4}  \def \len {#5}
\IfNoValueTF{#1}{\def \rot {0}}{\def \rot {#1} }
\IfNoValueTF{#7}{\def \p {p}}{\def \p {#7} }
\foreach \x in {0,1,...,\n} {\coordinate[rotate around={-0.5*\deg+\rot:(\o)}](\p\x) at ($(\o)+({\deg * \x/\n}:\len)$);}
\foreach \x[evaluate=\x as \y using int(\x+1),evaluate=\x as \z using int(\x-1)] in {0,1,...,\n}{
  {\draw (\o) node {}-- (\p\x)node{};}
}
}
\DeclareDocumentCommand\fitellipsis{o m m m m}{
\IfNoValueTF{#1}{\def \fc {gray}}{\def \fc {#1}}
\draw[fill=\fc!25, fill opacity=0.5] let \p1=(#2), \p2=(#3), \n1={atan2(\y2-\y1,\x2-\x1)}, \n2={veclen(\y2-\y1,\x2-\x1)} in ($ (\p1)!0.5!(\p2) $) ellipse [radius=\n2/2+#4pt, y radius=#5pt, rotate=\n1];
\foreach \v in {4,5,6} {
\fill ($ (#2)!\v/10!(#3) $) circle (0.5pt);
}
}
\newcommand{\dotellipsis}[4] 
{\draw[dashed] let \p1=(#1), \p2=(#2), \n1={atan2(\y2-\y1,\x2-\x1)}, \n2={veclen(\y2-\y1,\x2-\x1)}
    in ($ (\p1)!0.5!(\p2) $) ellipse [radius=\n2/2+#3pt, y radius=#4pt, rotate=\n1];
}
\DeclareDocumentCommand\hyperpath{o m m m }{
\IfNoValueTF{#1}{\def \fco {gray}}{\def \fco {#1}}
\pgfmathtruncatemacro{\len}{#4}
\foreach \v in {0,...,\len} {
	\coordinate (p\v) at ($(#2)!\v/\len!(#3)$);
	 }

\pgfmathtruncatemacro{\lena}{\len-1}
\foreach \cur  [count=\next from 1] in {0,...,\lena} {
	\fitellipsis[\fco]{p\cur}{p\next}{8}{6};
}
\foreach \v in {0,...,\len} {
   \node[shade,shading=ball,circle,ball color=red!90!white,minimum size=1.8pt,inner sep=0pt] at  (p\v) {};
	 }
}
\newcommand{\dotle}{
  \centering
  \begin{tikzpicture}
  \draw (0,0) node[inner sep = 0pt] {$<$};
  \fill (0.1,0) circle (0.02); 
  \end{tikzpicture}%
}
\newtheorem{lem}{Lemma}[section]
\newtheorem{proof*}{proof}[section]
\newtheorem{cor}{Corollary}[section]
\newtheorem{question}{Question}
\newtheorem{remark}{Remark}
\newtheorem{theorem}{Theorem}[section]
\newtheorem{corollary}[theorem]{Corollary}
\newtheorem{example}{Example}
\newcommand{\x}{{\ensuremath{ \boldsymbol{x}}}}
\newcommand{\y}{{\ensuremath{ \mathbf{y}}}}
\newcommand{\z}{{\ensuremath{ \mathbf{z}}}}
\DeclareMathOperator{\spec}{spec}
\title{Exploring Graphs with Distinct $M$-Eigenvalues: Product Operation, Wronskian Vertices, and Controllability\footnote{
   E-mail: shan\_haiying@tongji.edu.cn (H. Shan), 2111152@tongji.edu.cn (X. Liu) }}
\author{Haiying Shan\thanks{Corresponding author}, \, Xiaoqi Liu
\\ \small School of Mathematical Sciences, Key Laboratory of Intelligent Computing and Applications\\ \small(Ministry of Education), Tongji University, Shanghai, China}
\begin{document}
\maketitle

\begin{abstract}
Let $\mathcal{G}^M$ denote the set of connected graphs with distinct $M$-eigenvalues. This paper explores the $M$-spectrum and eigenvectors of a new product $G\circ_C H$ of graphs $G$ and $H$. We present the necessary and sufficient condition for $G\circ_C H$ to have distinct $M$-eigenvalues. Specifically, for the rooted product $G\circ H$, we present a more concise and precise condition. A key concept, the $M$-Wronskian vertex, which plays a crucial role in determining graph properties related to separability and construction of specific graph families, is investigated. We propose a novel method for constructing infinite pairs of non-isomorphic $M$-cospectral graphs in $\mathcal{G}^M$ by leveraging the structural properties of the $M$-Wronskian vertex. Moreover, the necessary and sufficient condition for $G\circ H$ to be $M$-controllable is given.
\end{abstract}
\vspace{4mm}  	

\noindent{\bfseries Keywords: }  Product of graphs; Distinct $M$-eigenvalues;  $M$-cospectral graph; $M$-controllable graph\\[2mm]

\section{Introduction}
Let \( G \) be a simple graph with \( n \) vertices, and let \( M(G) \) be a real symmetric matrix associated with \( G \). The characteristic polynomial of \( M(G) \), defined as \( \det(xI - M(G)) \), is called the \textbf{\( M \)-characteristic polynomial} and is denoted by \( \phi_M(G, x) \) or \( \phi(M(G), x) \). The eigenvalues of \( M(G) \) are referred to as the \textbf{\( M \)-eigenvalues}, and their multiset is the \textbf{\( M \)-spectrum}, denoted by \( \mathrm{Spec}_M(G) \). The spectrum of a matrix $M$ is denoted as $\mathrm{Spec}(M)$.

For specific choices of \( M(G) \), such as the adjacency matrix \( A \), Laplacian matrix \( L \), normalized Laplacian matrix \( Q \), or \( A_\alpha \) matrix, the corresponding eigenvalues are termed the \( A \)-spectrum, \( L \)-spectrum, \( Q \)-spectrum, and \( A_\alpha \)-spectrum, respectively.
The universal adjacency matrix of $G$ is defined as $U(G)=a A(G)+d D(G) +i I_n+j J_n$, where $I_n$ represents the identify matrix, $J_n$ represents the all-ones matrix, and $a\neq 0$, $d,i,j$ are all constant (see \cite{HAEMERS20112520}), we only consider the situation that $i=j=0$ in this paper. In the remaining part of this article, we assume by default that $M\in\{A,L,Q,A_\alpha,U\}$.

Let $A$ and $B$ be $m\times n$ and $p\times q$ matrices, respectively. The \textbf{Kronecker product} of $A$ and $B$, which is denoted as $A\otimes B$, is the $mp\times nq$ matrix as follows:
$$\begin{bmatrix}
	a_{11}B & a_{12}B &\cdots & a_{1n}B\\
	a_{21}B & a_{22}B & \cdots & a_{2n}B\\
	\vdots & \vdots & \ddots & \vdots\\
	a_{m1}B & a_{m2}B & \cdots & a_{mm}B
\end{bmatrix}
$$

Let \( G \) be a graph with vertex set \( V(G) = \{v_i\}_{i=1}^n \), and let \( H \) be a rooted graph with root \( u \). The \textbf{ rooted product} \( G \circ H \) with respect to the ``root'' $u$ is defined as follows: Take $n$
copies of $H$, and for every vertex $v_i$ of $G$, identify $v_i$
with the root $u$ of the $i$th copy of $H$.  For more details, see Godsil and MacKay~\cite{GM78} and Schwenk and Allen~\cite{computing}.
 
Here we define another product of graphs. Let $G=(V(G), E(G))$ and $H=(V(H), E(H))$ be two graphs, where $V(G)=\{u_1, u_2, \cdots, u_{n}\}$ and $V(H)=\{v_1, v_2, \cdots, v_{m}\}$, C is a (0,1)-symmetric matrix of order $m$. We define $G\circ_{C} H$ as the graph with vertex set $V(G\circ_C H)=\{(u_i,v_j)|u_i\in V(G), v_j\in V(H)\}$. The vertices $(u_{i_1},v_{j_1})$ and $(u_{i_2},v_{j_2})$ are adjacent in $G\circ_{C} H$ if either $u_{i_1}$ and $u_{i_2}$ are adjacent in $G$ and $c_{j_1j_2}=1$, or $v_{j_1}$ and $v_{j_2}$ are adjacent in $H$ and $i_1=i_2$ (see Figure 1 for example). 

\begin{figure}[H]
	\centering
	\begin{tikzpicture}[scale = 2.5,
		Node/.style = {fill,circle,scale =.5}]
		\node[Node,label = {[label distance=-2pt,above]0: $(u_1,v_1)$}] (1)  at (0, 1) {};
		\node[Node,label = {[label distance=-2pt,above]0: $(u_2,v_1)$}] (2)  at (1, 1) {};
		\node[Node,label = {[label distance=-2pt,above]0: $(u_3,v_1)$}] (3)  at (2, 1) {};
		\node[Node,label = {[label distance=-2pt,left]0: $(u_1,v_2)$}] (4)  at (0, 0.5) {};
		\node[Node,label = {[label distance=-2pt,left]0: $(u_2,v_2)$}] (5)  at (1, 0.5) {};
		\node[Node,label = {[label distance=-2pt,right]0: $(u_3,v_2)$}] (6)  at (2, 0.5) {};
		\node[Node,label = {[label distance=-2pt,below]0: $(u_1,v_3)$}] (7)  at (0, 0) {};
		\node[Node,label = {[label distance=-2pt,below]0: $(u_2,v_3)$}] (8)  at (1, 0) {};
		\node[Node,label = {[label distance=-2pt,below]0: $(u_3,v_3)$}] (9)  at (2, 0) {};
		\draw[color=black](1)--(4);
		\draw[color=black](2)--(5);
		\draw[color=black](3)--(6);
		\draw[color=black](4)--(7);
		\draw[color=black](5)--(8);
		\draw[color=black](6)--(9);
		\draw[color=black](1)--(2);
		\draw[color=black](3)--(2);
		\draw[color=black](7)--(8);
		\draw[color=black](8)--(9);
		\draw[color=black](4)--(8);
		\draw[color=black](5)--(9);
		\draw[color=black](7)--(5);
		\draw[color=black](6)--(8);
	\end{tikzpicture}
	\caption{Graph $P_3\circ_C P_3$ with $C=\begin{bmatrix}
			1 & 0 & 0\\
			0 & 0 & 1\\
			0 & 1 & 1
		\end{bmatrix}.$}
\end{figure}
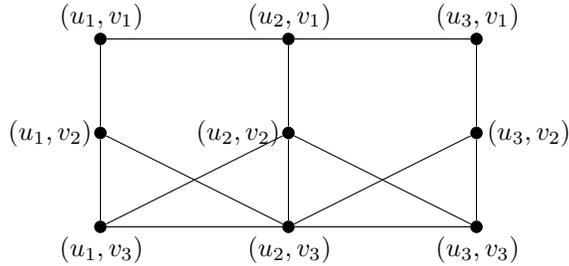

The adjacency matrix of $G\circ_CH$ is given by the formula \begin{equation}\label{eqq1}
    A(G\circ_CH)=C \otimes A(G) + A(H) \otimes I_n.
\end{equation}

Let  $C_1$ be the diagonal matrix such that the $i$-th diagonal element of $C_1$ is equal to the $i$-th row sum of $C$. Then we get 
\begin{equation*}\label{eqq2}
    D(G\circ_C H) = C_{1} \otimes D(G) + D(H) \otimes I_n.
\end{equation*}
When $C$ is not a diagonal matrix, we get $C\neq C_1$. For the universal adjacency matrix $U=aA+dD$, we have 
$$U(G\circ_CH)=a(C \otimes A(G) + A(H) \otimes I_n)+d(C_{1} \otimes D(G) + D(H) \otimes I_n),$$
then $$U(G\circ_CH)-C \otimes U(G) + U(H) \otimes I_n =d((C_{1}-C) \otimes D(G) )\neq 0.$$
When $C$ is a diagonal matrix, we get $C=C_1$, then for $M\in \{L,Q,A_\alpha,U\}$, we have 
$$M(G\circ_CH)=C \otimes M(G) + M(H) \otimes I_n.$$
Note that when $C=E_{i,i}$, $G\circ_C H$ turns out to be the rooted product of $G$ and $H$ where $u$ is the root vertex of $H$, which corresponds to the $i$-th row of $M(H)$. When $C=I_m$, $G\circ_C H$ turns into the Cartesian product of $G$ and $H$ (see \cite{MR209178}).

For each edge $e$ of $G$, let there be associated a real number $w_G(e)$, called its \textbf{weight}. Then $G$, together with these weights on its edges, is called a \textbf{weighted graph}, and denoted $(G,w_G)$ (or simply $G$). The weighted adjacency matrix $A(G)$ of $G$ is defined as $a_{ij}=w_G((v_i,v_j))$.
And the degree matrix $D(G)$ of $G$ is defined as 
$$d_{ij}=\begin{cases}
    \sum_k w_G((i,k))  &\text{if}\quad i=j\\
    0\quad &\text{otherwise}
\end{cases}$$

Let \(G\) and \(H\) be two weighted graphs with weighted adjacency matrices, respectively. Given a real symmetric matrix \(C\),  \(G \circ_C H\) is defined as the weighted graph whose weighted adjacency matrix satisfies:
\[
A(G\circ_C H) = C \otimes A(G) + A(H) \otimes I_n.
\]

A polynomial $ f(x) $ over a field $ K $ is separable if all of its roots in any extension of $ K $ are distinct. That is, for every root $ \alpha $ of $ f(x) $, we have $ f'(\alpha) \neq 0 $, meaning the derivative $ f'(x) $ does not vanish at $ \alpha $.
If the greatest common divisor of $ f(x) $ and its derivative $ f'(x) $ $ \text{gcd}(f, f') = 1 $, then $ f(x) $ is \textbf{separable}. If $ \text{gcd}(f, f') $ is a non-trivial polynomial (i.e., not a constant), then $ f(x) $ is \textbf{inseparable}.
 
A graph $ G $ is called \textbf{$ M $-separable} if the characteristic polynomial of $M(G)$  is separable, which means $G$ has $n$ distinct $M$-eigenvalues. We denote $\mathcal{G}^{M}$ as the set of connected $M$-separable graphs.

Two graphs $G$ and $H$ are called \textbf{$M$-cospectral} if $\mathrm{Spec}_M(G)=\mathrm{Spec}_M(H)$, and a graph $G$ is said to be determined by its $M$-spectrum ($DM S$ for short) if $G\cong H$ whenever $\mathrm{Spec}_M(G)=\mathrm{Spec}_M(H)$ for any graph $H$. 

Given differentiable functions $f_1$ and $f_2$, the \textbf{Wronskian determinant} (or simply the \textbf{Wronskian}) $W(f_1,f_2)$ is defined as the determinant of the following square matrix: $$ W(f_1,f_2)=\begin{vmatrix} f_1 & f_2 \\ f_1' & f_2' \end{vmatrix} $$

The Wronskian can be defined similarly for a set of functions. This determinant is crucial for analyzing the linear dependence of functions, see \cite{MR2115075}. 

Now, we introduce the concept of the \textbf{$M$-Wronskian vertex} of a graph.
Let $H$ be graph and $u$ is a vertex of $H$. $M(H)$ is a real symmetric matrix associated with $H$ and $M^u(H)$ is obtained by deleting the row and the column corresponding to $u$ from $M(H)$. If $$  \phi(M(H), x) \phi'(M^u(H), x)-\phi'(M(H), x)  \phi(M^u(H), x)  \neq 0$$  for all real $x$, we call $u$ the $M$-Wronskian vertex of $H$. Note that when $u$ is the $M$-Wronskian vertex of $H$, we always have $\gcd(\phi(M(H),x),\phi(M^u(H),x)) = 1$ and $H$ is $M$-separable.

Let $G_v^n$ be the graph obtained from $G$ by adding a pendant path of length $n$ at $v$ and denote $u_n$ as the pendant vertex of the pendant path (see Figure 2).

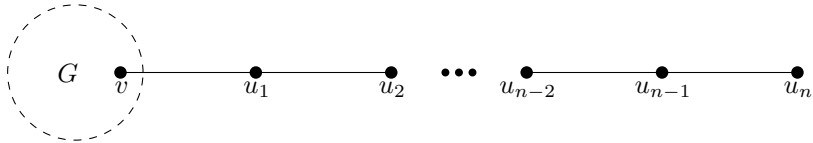
\begin{figure}[H]
\centering
    \begin{tikzpicture}[scale = 3,
        Node/.style = {fill,circle,scale =.5}
        ]
        \node[Node,label = {[label distance=-2pt,below]0: $v$}] (B)  at (0.4, 0) {};
        \node[Node,label = {[label distance=-2pt,below]0: $u_1$}] (3)  at (1, 0) {};
        \node[Node,label = {[label distance=-2pt,below]0: $u_2$}] (4)  at (1.6, 0) {};
        \node[Node,label = {[label distance=-2pt,below]0: $u_{n - 2}$}] (5)  at (2.2, 0) {};
        \node[Node,label = {[label distance=-2pt,below]0: $u_{n - 1}$}] (6)  at (2.8, 0) {};
        \node[Node,label = {[label distance=-2pt,below]0: $u_n$}] (C)  at (3.4, 0) {};
        \coordinate (1) at (0.2,0);
        \draw[dashed] (1) circle[radius = 0.3];
        \node[shift = {(-7mm,0pt)}] at (0.4,0){$G$};
        \longpath{B}{C}{5}{3}{p};
    \end{tikzpicture}
    \caption{Graph $G_v^n$.}
\end{figure}

An eigenvalue is considered main if its corresponding eigenspace contains a vector that is not orthogonal to the all-ones vector. We say a matrix $M$ is \textbf{controllable} if all the eigenvalues of $M$ are main, and a \textbf{$M$-controllable graph} is a connected graph whose $M$-matrix has distinct and main eigenvalues. 
It is well known in control theory (see \cite{chen2013linear}) that $M$ is
controllable if and only if the controllability matrix $C(M)=(e, Me, M^2 e, \dotsc, M^{n-1} e)$ has full rank, where $e$ is all-ones vector.
Cvetkovi{\'c} defined the controllable graph in \cite{zbMATH06289204} and provided some characterizations of the controllable graphs whose least eigenvalue is bounded from below by $-2$ in \cite{zbMATH06124038}. 
For $A=A(G)$, $C(A)$ is called the walk matrix of $G$ and denoted as $W(G)$. So $G$ is controllable if and only if $W(G)$ is full rank. Previous studies have investigated the controllability of specific types of graphs, such as graphs with a diameter of 
$n-2$ \cite{zbMATH07424199} and NEPSes of graphs \cite{zbMATH07549474}.

The study of graphs with distinct eigenvalues can be traced back to the work of Harary and Schwenk in 1974 \cite{Harary1974WhichGH}, where they posed an interesting problem: ``Which graphs have distinct eigenvalues?''. The pioneering result for this problem is from Mowshowitz \cite{mowshowitz1969group} and Chao \cite{CHAO1971301} generalized Mowshowitz's result to more general graph classes. Li et al. \cite{2015On} gave an algebraic characterization of the graphs with distinct eigenvalues. Lou et al. \cite{zbMATH06680902,zbMATH06812655} gave a new method to construct graphs with distinct $A$-eigenvalues and $Q$-eigenvalues, and Tian et al. \cite{zbMATH07794544}  improved and generalized the main results obtained by Lou et al. to $\alpha$-spectrum. The graphs they investigated can be obtained by the rooted product of some graph $G$ and $P_n$.

The remainder of this paper is organized as follows. In Section 2, the spectrum and the eigenvectors of $G\circ_C H$ were studied. In Section 3, our investigation focused on $G\circ_C H$ with distinct $M$-eigenvalues. We investigated the $M$-Wronskian vertex of $G^n_v$ and provided a construction method for infinite graphs with $M$-Wronskian vertex in Section 4. Section 5 is dedicated to studying the $DMS$-property of a rooted product graph and gives a method for constructing infinite pairs of non-isomorphic $M$-cospectral graphs in $\mathcal{G}^M$. Finally, the necessary and sufficient condition for $G\circ H$ to be $M$-controllable is given in Section 6.

\section{The $M$-spectrum and $M$-eigenvectors of $G\circ_C H$}

Godsil and MacKay~\cite{GM78} derived the $A$-characteristic polynomial of $G\circ H$. Moreover, Maghsoudi et al.~\cite{zbMATH06998812} determined the $Q$-characteristic polynomial of $G\circ H$. Furthermore, Rajkumar, R. and Pavithra, R. \cite{MR4364384} explored the spectra of more general rooted product graphs. Now we give the $M$-spectrum and $M$-eigenvector of $G\circ_C H$. We first give a result about the Kronecker product of matrices.

\begin{lem}\label{mlem1}
    Let $A$ and $B$ be two matrices of order $n$ and $m$, $C$ be a matrix of order $m$. Let $B(\mu)=B+\mu C$, define $S(\mu)$ as the multiset consisting of the eigenvalues of $B(\mu)$. Then, the spectrum of the matrix $D = C \otimes A + B \otimes I_n$ is determined as follows:
    $$\mathrm{Spec}(D)=\bigcup_{\mu\in \mathrm{Spec}(A)}S(\mu).$$
    Moreover, let $ \mathrm{Spec}(A) = \{\mu_1, \mu_2, \cdots, \mu_n\} $, and $\xi_1,\xi_2,\cdots,\xi_n$ form an orthonormal basis of $A$. And let $\eta_{1j}, \eta_{2j}, \dots, \eta_{mj} $ form an orthonormal basis of $B(\mu_j)$. Then $ \{\eta_{ij} \otimes \xi_j| i = 1, 2, \cdots, m,  j = 1, 2, \cdots, n \} $ spans the eigenspace of $ D $.
\end{lem}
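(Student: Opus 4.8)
The plan is to diagonalize $D$ by exploiting its tensor structure, reducing the problem to understanding how $D$ acts on the rank-one tensors $\eta \otimes \xi_j$, where $\xi_1,\dots,\xi_n$ is the orthonormal eigenbasis of $A$. Since all the matrices in play are real symmetric, $A$ admits such a basis with $A\xi_j = \mu_j \xi_j$ and real eigenvalues $\mu_j$; consequently each $B(\mu_j) = B + \mu_j C$ is again real symmetric and carries its own orthonormal eigenbasis $\eta_{1j},\dots,\eta_{mj}$, so these objects are well defined.

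The core computation uses the mixed-product property of the Kronecker product, $(X \otimes Y)(Z \otimes W) = (XZ)\otimes(YW)$. Applying $D = C\otimes A + B\otimes I_n$ to $\eta \otimes \xi_j$ yields
$$
D(\eta \otimes \xi_j) = (C\eta)\otimes(A\xi_j) + (B\eta)\otimes \xi_j = \mu_j\,(C\eta)\otimes \xi_j + (B\eta)\otimes \xi_j = \big((B+\mu_j C)\eta\big)\otimes \xi_j.
$$
In particular, taking $\eta = \eta_{ij}$ with $B(\mu_j)\eta_{ij} = \lambda_{ij}\,\eta_{ij}$ gives $D(\eta_{ij}\otimes \xi_j) = \lambda_{ij}\,(\eta_{ij}\otimes \xi_j)$, so each $\eta_{ij}\otimes\xi_j$ is an eigenvector of $D$ with eigenvalue $\lambda_{ij} \in S(\mu_j)$. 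This immediately shows every element of $\bigcup_j S(\mu_j)$ is an $M$-eigenvalue of $D$.

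To see that these exhaust the spectrum, I would verify that the $mn$ vectors $\{\eta_{ij}\otimes\xi_j\}$ are orthonormal: since $\langle \eta_{ij}\otimes\xi_j,\ \eta_{i'j'}\otimes\xi_{j'}\rangle = \langle \eta_{ij},\eta_{i'j'}\rangle\,\langle \xi_j,\xi_{j'}\rangle$, orthonormality of the $\xi$'s kills the product unless $j=j'$, and then orthonormality of the $\eta_{\cdot j}$'s kills it unless $i=i'$. Hence the $mn$ vectors form an orthonormal basis of the $mn$-dimensional ambient space, $D$ is fully diagonalized, and its spectrum as a multiset is exactly $\bigcup_{\mu\in\mathrm{Spec}(A)} S(\mu)$. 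I expect the only point requiring genuine care to be the multiplicity bookkeeping in this last step: the multiset union is the correct statement (rather than a union of sets) precisely because the orthonormal construction contributes exactly $m$ eigenvectors for each of the $n$ values $\mu_j$, so repeated eigenvalues across different $B(\mu_j)$, or within a single $B(\mu_j)$, are counted with the right multiplicity and the dimensions balance to $mn$.
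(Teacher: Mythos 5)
Your proposal is correct and takes essentially the same route as the paper's proof: the identical mixed-product computation $D(\eta\otimes\xi_j)=\bigl((B+\mu_j C)\eta\bigr)\otimes\xi_j$, followed by the observation that the $mn$ vectors $\eta_{ij}\otimes\xi_j$ are pairwise orthogonal and hence diagonalize $D$. If anything, your explicit case split in the orthogonality check (the $\xi$-factor vanishes when $j\neq j'$, the $\eta$-factor when $j=j'$ and $i\neq i'$) and your multiplicity bookkeeping are slightly more careful than the paper's version, which states the orthogonality with a notational slip (writing $\otimes$ for the inner product) and leaves the multiset accounting implicit.
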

\begin{proof}
    Let $\lambda\in \mathrm{Spec}(B(\mu))$, and assume $\eta\neq 0$ satisfies that $B(\mu)\eta=\lambda \eta$, and $\xi$ is a eigenvector of $A$ corresponding to $\mu$, then we have
\begin{align*}
D(\eta \otimes \xi) &= (C \otimes A)(\eta \otimes \xi) + (B\otimes I_n)(\eta \otimes \xi) \\
&= (C\eta \otimes A\xi) + (B\eta \otimes \xi) \\
&= (C\eta \otimes \mu \xi) + (B\eta \otimes \xi) \\
&= (\mu C\eta + B\eta) \otimes \xi \\
&= \lambda \eta \otimes \xi.
\end{align*}
Consequently, $\eta \otimes \xi$ represents an eigenvector of $D$ with the eigenvalue $\lambda$.

Consider the set of vectors $ \{\eta_{ij} \otimes \xi_j\} $, where $ i = 1, 2, \dots, m $ and $ j = 1, 2, \dots, n $, we have $(\eta_{i_1j_1} \otimes \xi_{j_1})\otimes(\eta_{i_2j_2}\otimes \xi_{j_2})=(\eta_{i_1j_1} \otimes \eta_{i_2j_2})\otimes (\xi_{j_1}\otimes \xi_{j_2})=0$ if either $i_1\neq i_2$ or $j_1\neq j_2$. And $D(\eta_{ij}\otimes \xi_j)=\lambda_{ij}(\eta_{ij}\otimes \xi_j)$, where $\lambda_{ij}$ is the eigenvalue of $B(\mu_j)$ corresponding to $\eta_{ij}$. Thus, $\mathrm{Spec}(D)=\bigcup_{\mu\in \mathrm{Spec}(A)}S(\mu)$ and $ \{\eta_{ij} \otimes \xi_j\} $ spans the eigenspace of $ D$.
\end{proof}
Let $G$ and $H$ be two graphs of order $n$ and $m$, respectively. $C$ is a (0,1)-symmetric matrix of order $m$, we have $A(G\circ_CH)=C \otimes A(G) + A(H) \otimes I_n$. Then by Lemma \ref{mlem1}, we can immediately get the following result.

\begin{theorem}\label{cthm1}
     Let $G$ and $H$ be two graphs of order $n$ and $m$, respectively. Suppose that $C$ is a (0,1)-symmetric matrix of order $m$. Let $ B(\mu) = A(H) + \mu C $, define $S(\mu)$ as the multiset consisting of the eigenvalues of $B(\mu)$. Then, the $A$-spectrum of $G\circ_C H$ is determined as follows:

$$\mathrm{Spec}_A(G\circ_C H)=\bigcup_{\mu\in \mathrm{Spec}_A(G)}S(\mu).$$
Moreover, let $ \mathrm{Spec}_A(G) = \{\mu_1, \mu_2, \cdots, \mu_n\} $, and $\xi_1,\xi_2,\cdots,\xi_n$ form an orthonormal basis of $A(G)$. And let $\eta_{1j}, \eta_{2j}, \dots, \eta_{mj} $ form an orthonormal basis of $B(\mu_j)$. Then $ \{\eta_{ij} \otimes \xi_j| i = 1, 2, \cdots, m,  j = 1, 2, \cdots, n \} $ spans the eigenspace of $ A(G\circ_C H) $.
\end{theorem}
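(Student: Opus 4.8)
The plan is to derive Theorem~\ref{cthm1} as a direct specialization of Lemma~\ref{mlem1}. The key observation is that the adjacency matrix of $G\circ_C H$ is given by the formula $A(G\circ_C H)=C\otimes A(G)+A(H)\otimes I_n$ established in Equation~\eqref{eqq1}. This is exactly the matrix $D=C\otimes A+B\otimes I_n$ appearing in Lemma~\ref{mlem1}, under the substitution $A\mapsto A(G)$ (the $n\times n$ factor) and $B\mapsto A(H)$ (the $m\times m$ factor), with the given $(0,1)$-symmetric $C$ of order $m$. One only has to check that the roles of $n$ and $m$ match: in the lemma $A$ has order $n$ and $B,C$ have order $m$, which aligns precisely with $G$ having order $n$ and $H$ having order $m$.

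First I would invoke Equation~\eqref{eqq1} to write $A(G\circ_C H)=C\otimes A(G)+A(H)\otimes I_n$. Then, setting $B(\mu)=A(H)+\mu C$ exactly as in the theorem statement, I would point out that this coincides with the $B(\mu)=B+\mu C$ of the lemma with $B=A(H)$. Applying Lemma~\ref{mlem1} verbatim yields the spectral decomposition
\[
\mathrm{Spec}_A(G\circ_C H)=\mathrm{Spec}(D)=\bigcup_{\mu\in\mathrm{Spec}(A(G))}S(\mu),
\]
and since $\mathrm{Spec}(A(G))=\mathrm{Spec}_A(G)$ by definition, this is the claimed formula. For the eigenvector statement, the lemma already guarantees that if $\xi_1,\dots,\xi_n$ is an orthonormal basis of eigenvectors of $A(G)$ and $\eta_{1j},\dots,\eta_{mj}$ is an orthonormal basis of $B(\mu_j)$, then the family $\{\eta_{ij}\otimes\xi_j\}$ spans the eigenspace of $D=A(G\circ_C H)$, which is exactly what is asserted.

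There is essentially no substantive obstacle here, since the content is entirely contained in Lemma~\ref{mlem1}; the theorem is a translation into graph-theoretic language. The only point requiring a moment of care is the bookkeeping of the two tensor factors and the order of the index $\mu$ running over $\mathrm{Spec}_A(G)$ rather than over the spectrum of $H$, because the Kronecker product $C\otimes A(G)$ places the $A(G)$-eigenvalue $\mu$ into the perturbation parameter of $B(\mu)=A(H)+\mu C$. I would therefore state explicitly that $A(G)$ plays the role of $A$ in the lemma (so its eigenvalues $\mu$ parametrize the fibers) while $A(H)$ plays the role of $B$, and that the $(0,1)$-symmetry of $C$ ensures all matrices involved are real symmetric so that orthonormal eigenbases exist. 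With these identifications recorded, the conclusion follows immediately and no further computation is needed.
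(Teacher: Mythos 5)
Your proposal is correct and matches the paper's own argument exactly: the paper likewise cites the identity $A(G\circ_C H)=C\otimes A(G)+A(H)\otimes I_n$ and then applies Lemma~\ref{mlem1} with $A(G)$ in the role of $A$ and $A(H)$ in the role of $B$, treating the theorem as an immediate specialization. Your extra remarks on index bookkeeping and real symmetry are sound but add nothing beyond what the paper leaves implicit.
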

When $C$ is a (0,1)-diagonal matrix, we have $C=C_1$, then $M(G\circ_CH)=C \otimes M(G) + M(H) \otimes I_n$, from Lemma \ref{mlem1} we have:
\begin{theorem}\label{cthm2}
     Let $G$ and $H$ be two graphs of order $n$ and $m$, respectively. Suppose that $C$ is a (0,1)-diagonal matrix of order $m$, and $M(G)$ is a real symmetric matrix associated with $G$. Let $ B(\mu) = M(H) + \mu C $, and define $S(\mu)$ as the multiset consisting of the eigenvalues of $B(\mu)$. Then, the $M$-spectrum of $G\circ_C H$ is determined as follows:

$$\mathrm{Spec}_M(G\circ_C H)=\bigcup_{\mu\in \mathrm{Spec}_M(G)}S(\mu).$$
Moreover, let $ \mathrm{Spec}_M(G) = \{\mu_1, \mu_2, \cdots, \mu_n\} $, and $\xi_1,\xi_2,\cdots,\xi_n$ form an orthonormal basis of $M(G)$. And let $\eta_{1j}, \eta_{2j}, \dots, \eta_{mj} $ form an orthonormal basis of $B(\mu_j)$. Then $ \{\eta_{ij} \otimes \xi_j| i = 1, 2, \cdots, m,  j = 1, 2, \cdots, n \}$ spans the eigenspace of $ M(G\circ_C H) $.
\end{theorem}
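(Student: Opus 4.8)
The plan is to derive Theorem~\ref{cthm2} as a direct specialization of Lemma~\ref{mlem1}, once the correct matrix decomposition of $M(G\circ_C H)$ is in place. Concretely, I would set $A=M(G)$ (of order $n$), $B=M(H)$ (of order $m$), and keep the same $C$; then the matrix $D=C\otimes A+B\otimes I_n$ appearing in the lemma becomes exactly $C\otimes M(G)+M(H)\otimes I_n$, its associated one-parameter family is $B(\mu)=M(H)+\mu C$, and the conclusion of the lemma transcribes verbatim into the stated $M$-spectrum formula and the eigenspace description.

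The one genuine point that must be checked first is the identity $M(G\circ_C H)=C\otimes M(G)+M(H)\otimes I_n$ for every $M\in\{L,Q,A_\alpha,U\}$. I would argue this from linearity, reducing everything to the adjacency and degree parts. Equation~\eqref{eqq1} already gives $A(G\circ_C H)=C\otimes A(G)+A(H)\otimes I_n$. For the degree matrix the general formula is $D(G\circ_C H)=C_1\otimes D(G)+D(H)\otimes I_n$, where $C_1$ records the row sums of $C$; the crucial observation is that when $C$ is a $(0,1)$-diagonal matrix each row sum coincides with the diagonal entry, so $C_1=C$ and hence $D(G\circ_C H)=C\otimes D(G)+D(H)\otimes I_n$. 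Because $L=D-A$, $Q=D+A$, $A_\alpha=\alpha D+(1-\alpha)A$ and $U=aA+dD$ are all fixed linear combinations of $A$ and $D$, and since both $A$ and $D$ now share the identical Kronecker shape against the same $C$, each $M$-matrix inherits the decomposition $M(G\circ_C H)=C\otimes M(G)+M(H)\otimes I_n$ term by term.

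With the decomposition established, the remaining work is purely a matter of matching notation to Lemma~\ref{mlem1}: the lemma's spectrum equality $\mathrm{Spec}(D)=\bigcup_{\mu\in\mathrm{Spec}(A)}S(\mu)$ becomes $\mathrm{Spec}_M(G\circ_C H)=\bigcup_{\mu\in\mathrm{Spec}_M(G)}S(\mu)$, and the spanning family $\{\eta_{ij}\otimes\xi_j\}$ built from an orthonormal eigenbasis $\xi_1,\dots,\xi_n$ of $M(G)$ together with orthonormal eigenbases of each $B(\mu_j)=M(H)+\mu_j C$ transfers directly. I expect no real obstacle in this last step; the only place demanding care is the degree-matrix identity above, since for a non-diagonal $C$ the product uses $C_1\neq C$ and the clean decomposition (and hence the whole theorem) fails — this is precisely why the hypothesis that $C$ be $(0,1)$-diagonal is indispensable.
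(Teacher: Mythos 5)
Your proposal is correct and matches the paper's own argument: the paper likewise observes that for a $(0,1)$-diagonal matrix the row-sum matrix satisfies $C_1=C$, so that $M(G\circ_C H)=C\otimes M(G)+M(H)\otimes I_n$ for each $M$ by linearity in $A$ and $D$, and then invokes Lemma~\ref{mlem1} with $A=M(G)$, $B=M(H)$. Your extra remark that the decomposition fails for non-diagonal $C$ (since $C_1\neq C$) is exactly the point the paper makes when discussing the universal adjacency matrix.
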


In the context of the root product $G\circ H$, assume that the vertex $u$ corresponds to the first row of $M(H)$, then $$|xI-B(\mu)|=\phi(M(H),x)-\mu \phi(M^u(H),x),$$ where $ B(\mu) = M(H) + \mu E_{1,1} $. 
Subsequently, applying Theorem \ref{cthm2}, we obtain the following result:
\begin{corollary}\label{thm:spec_rg}
Let $G$ be a graph of order $n$, and $H$ be a rooted graph of order $m$ with root vertex $u$. $M(G)$ is a real symmetric matrix associated with $G$.  Define $S(\mu)$ as the multiset consisting of the roots of $\phi(M(H),x)-\mu \phi(M^u(H),x)$. Then, the $M$-spectrum of $G\circ H$ is determined as follows: 

$$\mathrm{Spec}_M(G\circ H)=\bigcup_{\mu\in \mathrm{Spec}_M(G)}S(\mu).$$
Furthermore, suppose $u$ corresponds to the first row of $M(H)$ and $B(\mu_j)=M(H)+\mu E_{1,1}$. Let $ \mathrm{Spec}_M(G) = \{\mu_1, \mu_2, \cdots, \mu_n\} $, and $\xi_1,\xi_2,\cdots,\xi_n$ form an orthonormal basis of $M(G)$. And let $\eta_{1j}, \eta_{2j}, \dots, \eta_{mj} $ form an orthonormal basis of $B(\mu_j)$. Then $ \{\eta_{ij} \otimes \xi_j| i = 1, 2, \cdots, m,  j = 1, 2, \cdots, n \} $ spans the eigenspace of $ M(G\circ H) $.
\end{corollary}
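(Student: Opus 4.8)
The plan is to obtain this corollary as a direct specialization of Theorem \ref{cthm2}. The rooted product $G \circ H$, with root $u$ corresponding to the first row of $M(H)$, is exactly $G \circ_C H$ for the choice $C = E_{1,1}$, and $E_{1,1}$ is a $(0,1)$-diagonal matrix of order $m$. Hence $M(G \circ H) = E_{1,1} \otimes M(G) + M(H) \otimes I_n$, and Theorem \ref{cthm2} applies verbatim, yielding both the spectral decomposition $\mathrm{Spec}_M(G \circ H) = \bigcup_{\mu \in \mathrm{Spec}_M(G)} S(\mu)$ and the spanning family $\{\eta_{ij} \otimes \xi_j\}$ of eigenvectors, where now $B(\mu) = M(H) + \mu E_{1,1}$ and, in the language of Theorem \ref{cthm2}, $S(\mu)$ is the multiset of eigenvalues of $B(\mu)$. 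The only remaining task is to match this $S(\mu)$ with the multiset of roots of the stated polynomial.

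The key step is therefore the characteristic-polynomial identity $\det(xI - B(\mu)) = \phi(M(H), x) - \mu\,\phi(M^u(H), x)$ announced just before the statement. Writing $N = xI_m - M(H)$, we have $xI_m - B(\mu) = N - \mu E_{1,1}$, that is, $N$ with $\mu$ subtracted from its $(1,1)$ entry. Expanding the determinant by linearity in the first column, whose entries are those of $N$ minus $\mu e_1$, the determinant splits into two contributions: the term $\det N = \phi(M(H), x)$, and $-\mu$ times the determinant of $N$ with its first column replaced by $e_1$. Expanding the latter along that column leaves precisely the $(1,1)$-minor of $N$, namely $\det(xI_{m-1} - M^u(H)) = \phi(M^u(H), x)$, since deleting the first row and column of $xI_m - M(H)$ removes exactly the row and column indexed by $u$. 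This establishes the identity, so $S(\mu)$ is indeed the multiset of roots in $x$ of $\phi(M(H), x) - \mu\,\phi(M^u(H), x)$.

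I anticipate essentially no obstacle here: the determinant identity is a one-line cofactor expansion (equivalently the matrix determinant lemma applied to the rank-one update $\mu\, e_1 e_1^{\top}$, extended to all real $x$ by polynomial continuity), and the symmetry of $B(\mu) = M(H) + \mu E_{1,1}$ guarantees the orthonormal eigenbasis $\eta_{1j}, \ldots, \eta_{mj}$ demanded by Theorem \ref{cthm2}. The only point that deserves care is the indexing convention: one must align the rooted product with $C = E_{1,1}$ so that the root $u$ corresponds to the first row of $M(H)$, ensuring that the minor obtained by deleting the first row and column is genuinely $M^u(H)$. Once this bookkeeping is fixed, the spectral and eigenvector statements follow immediately from Theorem \ref{cthm2}.
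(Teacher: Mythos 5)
Your proposal is correct and follows the same route as the paper: the paper also obtains the corollary by identifying $G\circ H$ with $G\circ_{E_{1,1}}H$, applying Theorem \ref{cthm2}, and invoking the identity $\det(xI-B(\mu))=\phi(M(H),x)-\mu\,\phi(M^u(H),x)$. The only difference is that the paper states this determinant identity without proof, whereas you verify it explicitly via multilinearity in the first column (equivalently, the matrix determinant lemma), which is a correct and welcome elaboration.
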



\section{$G\circ_C H$ with distinct $M$-eigenvalues}

Lou et al. \cite{zbMATH06680902,zbMATH06812655} have shown that $G\circ P_n$ has distinct $A$-eigenvalues, $Q$-eigenvalues if and only if $G$ has distinct $A$-eigenvalues, $Q$-eigenvalues, respectively. Thereafter, Tian et al. \cite{zbMATH07794544} generalized Lou et al.'s result to the $A_\alpha$-spectrum. We further investigate $G\circ_C H$ with distinct $M$-eigenvalues in this section. Based on Theorem \ref{cthm1}, the following result is obvious.
\begin{theorem}
   Let $G$ and $H$ be two graphs of order $n$ and $m$, respectively. Suppose that $C$ is a (0,1)-symmetric matrix of order $m$. Let $ B(\mu) = A(H) + \mu C $, define $S(\mu)$ as the multiset consisting of the eigenvalues of $B(\mu)$. Then $G\circ_C H$ is $A$-separable if and only if both of the following two conditions hold
    \begin{enumerate}[(1).]
        \item For any $\mu\in \mathrm{Spec}_A(G)$, $B(\mu)$ has distinct eigenvalues,
        \item For any two distinct  $\mu_1,\mu_2\in \mathrm{Spec}_A(G)$, $S(\mu_1)\cap S(\mu_2)=\emptyset$.
    \end{enumerate}
\end{theorem}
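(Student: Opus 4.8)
The plan is to derive the statement directly from Theorem~\ref{cthm1}, which already gives the complete description of the $A$-spectrum of $G\circ_C H$ as the multiset union $\bigcup_{\mu\in\mathrm{Spec}_A(G)}S(\mu)$. Since $G\circ_C H$ is $A$-separable precisely when this multiset consists of $nm$ pairwise distinct values, the entire argument reduces to unpacking what it means for a union of multisets to have no repeated element. First I would observe that a repetition in $\bigcup_{\mu}S(\mu)$ can arise in exactly one of two mutually exclusive ways: either a single $S(\mu)$ already contains a repeated value, or two values coincide across distinct $S(\mu_1)$ and $S(\mu_2)$.

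The forward direction is then immediate by contraposition. If condition (1) fails, some $B(\mu)$ with $\mu\in\mathrm{Spec}_A(G)$ has a repeated eigenvalue, so $S(\mu)$ already contains a value with multiplicity at least two; since $S(\mu)$ is a submultiset of the total spectrum, $G\circ_C H$ has a repeated $A$-eigenvalue and is not $A$-separable. If condition (2) fails, there exist distinct $\mu_1,\mu_2\in\mathrm{Spec}_A(G)$ with $S(\mu_1)\cap S(\mu_2)\neq\emptyset$, so a common value appears in both contributing submultisets, again forcing a repeated eigenvalue in the union. Conversely, if both (1) and (2) hold, then each $S(\mu)$ is itself multiplicity-free, and the $S(\mu)$ for distinct $\mu$ are pairwise disjoint, so the disjoint union is multiplicity-free and all $nm$ eigenvalues of $A(G\circ_C H)$ are distinct.

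One point I would be careful to state cleanly is that $\mathrm{Spec}_A(G)$ is here treated as a \emph{set} of distinct values $\mu$ indexing the family $\{S(\mu)\}$, whereas the multiset union counts multiplicities within and across the $S(\mu)$. The only genuine subtlety — and the step I expect to be the main (minor) obstacle — is the bookkeeping when $G$ itself has a repeated $A$-eigenvalue $\mu$: in that case the same $B(\mu)$ contributes $S(\mu)$ more than once, which would automatically create repetitions. I would handle this by noting that $G$ having distinct $A$-eigenvalues is already forced by condition (2) (taking $\mu_1=\mu_2$ would otherwise give $S(\mu_1)\cap S(\mu_2)=S(\mu)\neq\emptyset$), or alternatively by restricting attention to the case where $\mu$ ranges over distinct values and tracking the contribution of each $\mu$ with its multiplicity; either way the two stated conditions correctly capture separability. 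Beyond this observation the proof is a one-line appeal to Theorem~\ref{cthm1} together with the elementary fact that a union of multisets is multiplicity-free iff each summand is multiplicity-free and the summands are pairwise disjoint.
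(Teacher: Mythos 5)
Your proof is correct and takes exactly the route the paper does: the paper gives no proof at all, stating the result is ``obvious'' based on Theorem~\ref{cthm1}, and your argument is precisely that unpacking of the multiset union $\bigcup_{\mu\in \mathrm{Spec}_A(G)}S(\mu)$ into the two failure modes (a repeated root within one $S(\mu)$, or a collision between $S(\mu_1)$ and $S(\mu_2)$). Your flagged subtlety about a repeated eigenvalue $\mu$ of $G$ duplicating $S(\mu)$ in the union --- so that condition (2) must be read over the multiset $\mathrm{Spec}_A(G)$ (two equal eigenvalues counting as distinct members) rather than over distinct values, lest the statement fail for, e.g., $H=K_1$ with $C=(1)$ and $G$ non-separable --- is a genuine point the paper glosses over, and you resolve it correctly.
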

Similarly, we can immediately obtain the following result according to Theorem \ref{cthm2}.
\begin{theorem}\label{1thm1}
     Let $G$ and $H$ be two graphs of order $n$ and $m$, respectively. Suppose that $C$ is a (0,1)-diagonal matrix of order $m$. Let $ B(\mu) = M(H) + \mu C $, define $S(\mu)$ as the multiset consisting of the eigenvalues of $B(\mu)$. Then $G\circ_C H$ is $M$-separable if and only if both of the following two conditions hold
    \begin{enumerate}[(1).]
        \item For any $\mu\in \mathrm{Spec}_M(G)$, $B(\mu)$ has distinct eigenvalues,
        \item For any two distinct  $\mu_1,\mu_2\in \mathrm{Spec}_M(G)$, $S(\mu_1)\cap S(\mu_2)=\emptyset$.
    \end{enumerate}
\end{theorem}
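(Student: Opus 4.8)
The plan is to deduce the statement directly from Theorem \ref{cthm2}, which already computes the full $M$-spectrum of $G\circ_C H$ as the multiset union $\mathrm{Spec}_M(G\circ_C H)=\bigcup_{\mu\in\mathrm{Spec}_M(G)}S(\mu)$. Since $G\circ_C H$ has order $mn$, its $M$-matrix has exactly $mn$ eigenvalues counted with multiplicity, and $M$-separability is, by definition, the assertion that these $mn$ values are pairwise distinct. Thus the whole problem collapses to asking when the multiset on the right-hand side consists of $mn$ genuinely distinct numbers. First I would record the bookkeeping that makes the reduction legitimate: each block $S(\mu)$ contains exactly $m$ numbers (the eigenvalues of the $m\times m$ matrix $B(\mu)$) and $\mathrm{Spec}_M(G)$ contributes $n$ indices, so the total count $mn$ matches the order of $G\circ_C H$ and no eigenvalues are created or lost in the union.

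The core of the argument is then a single elementary observation about multiset unions: a finite union $\bigcup_k T_k$ of multisets has all its elements distinct if and only if (a) each $T_k$ has distinct elements internally and (b) the $T_k$ are pairwise disjoint as sets. Taking $T_k=S(\mu)$ yields precisely conditions (1) and (2). For the forward direction I would argue by contraposition: if some $B(\mu)$ has a repeated eigenvalue then $S(\mu)$, and hence the whole union, contains a repeat; and if $S(\mu_1)\cap S(\mu_2)\neq\emptyset$ for two distinct $\mu_1,\mu_2$, then a common value appears at least twice. In either case $G\circ_C H$ fails to be $M$-separable. The reverse direction is the same observation read the other way: distinct-within-blocks together with pairwise-disjoint-across-blocks forces all $mn$ entries to be distinct, so no routine computation remains.

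The one point requiring care — and the step I expect to be the main obstacle — is the bookkeeping of multiplicities in $\mathrm{Spec}_M(G)$ itself. Because the union in Theorem \ref{cthm2} is indexed by the \emph{multiset} $\mathrm{Spec}_M(G)$, a repeated eigenvalue $\mu$ of $G$ causes the identical block $S(\mu)$ to appear twice in $\mathrm{Spec}_M(G\circ_C H)$, automatically producing repeated eigenvalues of $G\circ_C H$ regardless of how $B(\mu)$ behaves internally. To keep the equivalence exact I would read the index set of condition (2) as the multiset $\mathrm{Spec}_M(G)$ and interpret ``two distinct $\mu_1,\mu_2$'' as two distinct entries of this multiset; with that convention a repeated eigenvalue of $G$ yields a pair with $S(\mu_1)=S(\mu_2)\neq\emptyset$, so condition (2) fails and the degenerate case is correctly excluded. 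Equivalently, I would state as a preliminary remark that conditions (1) and (2) together force $G$ to be $M$-separable, after which (2) may be quantified over the distinct eigenvalues of $G$ without ambiguity. Once this convention is fixed, the theorem is an immediate consequence of Theorem \ref{cthm2} and the multiset observation above; the parallel statement for symmetric $(0,1)$-matrices $C$ follows identically from Theorem \ref{cthm1}.
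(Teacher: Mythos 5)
Your proposal is correct and takes essentially the same route as the paper, which offers no written proof at all for this statement: it simply remarks that the result follows ``immediately'' from Theorem \ref{cthm2}, exactly the multiset-union reading you spell out. Your one added refinement --- observing that a repeated eigenvalue $\mu$ of $G$ makes the block $S(\mu)$ appear twice in $\bigcup_{\mu\in\mathrm{Spec}_M(G)}S(\mu)$, so that condition (2) must be read over the multiset $\mathrm{Spec}_M(G)$ (equivalently, the conditions must force $G$ itself to be $M$-separable, as the paper's Lemma \ref{thm2} later makes explicit for the rooted product) --- is a genuine point of care that the paper leaves implicit, and it is needed for the stated equivalence to be literally exact.
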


Next, we focus on the root product, from Corollary \ref{thm:spec_rg}, the following result holds.
\begin{lem}\label{thm1}
    Let $G$ and $H$ be two graphs. Then $S(\mu_1)\bigcap S(\mu_2)=\emptyset$ for any two distinct  $\mu_1,\mu_2\in \mathrm{Spec}_M(G)$ if and only if $(\phi(M(H),x), \phi(M^u(H),x))=1$.
\end{lem}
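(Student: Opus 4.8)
The plan is to reduce everything to the two polynomials $P(x) := \phi(M(H),x)$ and $Q(x) := \phi(M^u(H),x)$ and to translate statements about the sets $S(\mu)$ into statements about common roots of $P$ and $Q$. By Corollary~\ref{thm:spec_rg}, $S(\mu)$ is exactly the set of roots of $P(x)-\mu Q(x)$, so the entire lemma becomes an analysis of when these root sets can overlap for two distinct values of the parameter $\mu$. Throughout I use that, since $M(H)$ and $M^u(H)$ are real symmetric, both $P$ and $Q$ have only real roots.

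First I would prove the implication $\gcd(P,Q)=1 \Rightarrow$ disjointness. Suppose toward a contradiction that some $x_0 \in S(\mu_1)\cap S(\mu_2)$ with $\mu_1\neq\mu_2$. Then $P(x_0)-\mu_1 Q(x_0)=0$ and $P(x_0)-\mu_2 Q(x_0)=0$; subtracting these two equations gives $(\mu_2-\mu_1)Q(x_0)=0$, and since $\mu_1\neq\mu_2$ this forces $Q(x_0)=0$, whence $P(x_0)=0$ as well. Thus $x_0$ is a common root of $P$ and $Q$, contradicting $\gcd(P,Q)=1$. This is the crux of the argument: subtracting the two defining equations eliminates $P$ and isolates the common-root condition.

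For the reverse implication I would argue by contraposition: assume $\gcd(P,Q)\neq 1$. A nontrivial common factor yields a common root $x_0$, and since all roots of $P$ and $Q$ are real, $x_0$ is real with $P(x_0)=Q(x_0)=0$. Then $P(x_0)-\mu Q(x_0)=0$ for \emph{every} $\mu$, so $x_0\in S(\mu)$ for all $\mu\in\mathrm{Spec}_M(G)$. Choosing any two distinct eigenvalues $\mu_1,\mu_2\in\mathrm{Spec}_M(G)$ gives $x_0\in S(\mu_1)\cap S(\mu_2)\neq\emptyset$, which negates the disjointness condition. Combining the two directions completes the proof.

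The one delicate point, and the step I expect to need explicit care, is the tacit assumption in the reverse direction that $\mathrm{Spec}_M(G)$ contains at least two distinct values, which is what allows me to produce the pair $\mu_1,\mu_2$. This holds whenever $G$ is connected with at least two vertices: its $M$-matrix is then not a scalar multiple of the identity and hence has at least two distinct eigenvalues, consistent with the standing hypotheses of the paper. I would note this explicitly so that the vacuous case (a single eigenvalue, where disjointness is trivially satisfied but $\gcd(P,Q)$ need not be $1$) is excluded. Beyond this observation the argument is a short algebraic manipulation with no further obstacles.
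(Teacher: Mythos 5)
Your proof is correct and follows essentially the same route as the paper's: the same subtraction of the two defining equations $\phi(M(H),\lambda)-\mu_i\phi(M^u(H),\lambda)=0$ to force a common root in one direction, and in the other direction observing that a common root of $\phi(M(H),x)$ and $\phi(M^u(H),x)$ lies in $S(\mu)$ for every $\mu$. Your explicit handling of the vacuous case (that $\mathrm{Spec}_M(G)$ must contain two distinct values, guaranteed since $G$ is connected of order at least $2$) and of the realness of a common root are minor refinements of points the paper leaves implicit, not a different argument.
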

\begin{proof}
    Assume that $(\phi(M(H),x),\phi(M^u(H),x)) = 1$. Let $\lambda\in S(\mu_1)\cap S(\mu_2)$ for some $\mu_1\neq\mu_2$. Then we have 
\begin{align*}
\phi(M(H),\lambda)-\mu_1\phi(M^u(H),\lambda) &= 0,\\
\phi(M(H),\lambda)-\mu_2\phi(M^u(H),\lambda) &= 0.
\end{align*}
Subtracting these two equations gives $(\mu_1-\mu_2)\phi(M^u(H),\lambda)=0$. Since $\mu_1\neq\mu_2$, we obtain $\phi(M^u(H),\lambda)=0$. Then substituting this into $\phi(M(H),\lambda)-\mu_1\phi(M^u(H),\lambda)=0$ implies that $\phi(M(H),\lambda)=0$. So $x-\lambda$ divides both $\phi(M(H),x)$ and $\phi(M^u(H),x)$, that is, $(x-\lambda)|\gcd(\phi(M(H),x),\phi(M^u(H),x))$, a contradiction.

Now, assume that for any $\mu_1\neq\mu_2$ with $\mu_1,\mu_2\in \mathrm{Spec}_M(G)$, we have $S(\mu_1)\cap S(\mu_2)=\emptyset$.
Suppose that there exists a $\lambda$ such that $(x-\lambda)|\gcd(\phi(M(H),x), \phi(M^u(H),x))$. Then $\lambda\in S(\mu)$ for any $\mu \in \mathrm{Spec}_M(G)$, which implies that $\lambda\in S(\mu_1)\cap S(\mu_2)$, a contradiction.
\end{proof}

Consider the derivative of $\phi(M(H),x)$ and $\phi(M^u(H),x)$. If we desire that the equation $\phi(M(H),x)-\mu\phi(M^u(H),x)=0$ has distinct eigenvalues,  then we should have $\gcd(\phi(M(H),x)-\mu\phi(M^u(H),x),(\phi(M(H),x)-\mu\phi(M^u(H),x))')=1$. Suppose $x-\lambda|\gcd(\phi(M(H),x)-\mu\phi(M^u(H),x),(\phi(M(H),x)-\mu\phi(M^u(H),x))')$. Then we have \begin{align*}
    \phi(M(H),\lambda)-\mu\phi(M^u(H),\lambda) &= 0,\\
   \phi'(M(H),\lambda)-\mu\phi'(M^u(H),\lambda) &= 0.
\end{align*}
From these equations, we can obtain that $$\phi(M(H), \lambda) \phi'(M^u(H), \lambda)-\phi'(M(H), \lambda)  \phi(M^u(H), \lambda)=0.$$
Moreover, if
$\phi(M^u(H),\lambda)\neq 0$, we can get that $\mu=\frac{\phi(M(H),\lambda)}{\phi(M^u(H),\lambda)}$. Based on the above analysis, we have the following result.

\begin{lem}\label{thm2}
Let \(G\) and \(H\) be two graphs. Let \(\mathcal{R}\) be the set of all real roots of the equation  
\[
\phi(M(H), x) \cdot \phi'(M^u(H), x) - \phi'(M(H), x) \cdot \phi(M^u(H), x) = 0.
\]
Then \(G \circ H\) is \(M\)-separable if and only if the following conditions hold:
\begin{enumerate}[(1)]
    \item $G$ is $M$-separable and \(\gcd(\phi(M(H), x), \phi(M^u(H), x)) = 1\),
    \item \(\mathrm{Spec}_M(G) \cap \left\{\frac{\phi(M(H), x)}{\phi(M^u(H), x)} \mid x \in \mathcal{R}\right\} = \emptyset\).
\end{enumerate}
\end{lem}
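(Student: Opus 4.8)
The plan is to read off the $M$-spectrum of $G\circ H$ from Corollary~\ref{thm:spec_rg} and translate the statement ``all $nm$ eigenvalues are distinct'' into elementary requirements on the root multisets $S(\mu)$. Write $p_\mu(x)=\phi(M(H),x)-\mu\,\phi(M^u(H),x)$, so that $S(\mu)$ is exactly the root multiset of $p_\mu$. Since $p_\mu(x)=|xI-B(\mu)|$ is the characteristic polynomial of the real symmetric matrix $B(\mu)=M(H)+\mu E_{1,1}$, it is monic of degree $m=|V(H)|$ and every element of $S(\mu)$ is real; in particular any repeated root of $p_\mu$ is automatically real, which is precisely why only the real root set $\mathcal{R}$ can be relevant. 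The multiset $\mathrm{Spec}_M(G\circ H)=\bigcup_{\mu\in\mathrm{Spec}_M(G)}S(\mu)$ then consists of $nm$ distinct reals if and only if (a) no value $\mu$ is repeated in $\mathrm{Spec}_M(G)$, (b) each $S(\mu)$ is itself multiplicity-free, and (c) $S(\mu_1)\cap S(\mu_2)=\emptyset$ for distinct $\mu_1,\mu_2\in\mathrm{Spec}_M(G)$. I will match (a) together with (c) to condition~(1), and (b) to condition~(2).

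Requirement (a) is just the assertion that $G$ is $M$-separable: each $\mu$ contributes the nonempty block $S(\mu)$ (it has $m\ge 1$ roots), so a repeated $\mu$ forces a repeated eigenvalue of $G\circ H$, and conversely distinct $\mu$'s produce disjoint blocks by (c). Requirement (c) is exactly the content of Lemma~\ref{thm1}, which identifies the disjointness of all the $S(\mu)$ with $\gcd(\phi(M(H),x),\phi(M^u(H),x))=1$. Combining these two facts is precisely condition~(1), so this half of the equivalence is essentially bookkeeping.

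The heart of the argument is requirement (b). A repeated root $\lambda$ of $p_\mu$ is characterized by $p_\mu(\lambda)=p_\mu'(\lambda)=0$, i.e.
\begin{align*}
\phi(M(H),\lambda)-\mu\,\phi(M^u(H),\lambda)&=0,\\
\phi'(M(H),\lambda)-\mu\,\phi'(M^u(H),\lambda)&=0.
\end{align*}
Eliminating $\mu$ between these (equivalently, forcing the $2\times 2$ coefficient determinant to vanish) yields $\phi(M(H),\lambda)\phi'(M^u(H),\lambda)-\phi'(M(H),\lambda)\phi(M^u(H),\lambda)=0$, which says exactly that $\lambda\in\mathcal{R}$; and when $\phi(M^u(H),\lambda)\ne0$ the first equation recovers $\mu=\phi(M(H),\lambda)/\phi(M^u(H),\lambda)$. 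Thus some $p_\mu$ with $\mu\in\mathrm{Spec}_M(G)$ has a repeated root if and only if $\mathrm{Spec}_M(G)$ meets the value set $\{\phi(M(H),x)/\phi(M^u(H),x):x\in\mathcal{R}\}$, which is precisely the failure of condition~(2). Running this equivalence in both directions — assuming (b) to derive (2), and assuming (2) to contradict a hypothetical repeated root — closes the argument.

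The one genuinely delicate point, which I expect to need care, is the case $\phi(M^u(H),\lambda)=0$ for $\lambda\in\mathcal{R}$, where the ratio defining the value set in~(2) is not even defined. Here condition~(1) is exactly what rescues the argument: if $\phi(M^u(H),\lambda)=0$ and $\lambda$ were a repeated root of some $p_\mu$, then $p_\mu(\lambda)=0$ forces $\phi(M(H),\lambda)=0$ as well, so $x-\lambda$ would be a common factor of $\phi(M(H),x)$ and $\phi(M^u(H),x)$, contradicting $\gcd=1$. Hence under~(1) no such $\lambda$ can ever create a repeated root, so the value set in~(2) may be taken over only those $x\in\mathcal{R}$ with $\phi(M^u(H),x)\ne 0$ without loss. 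This interaction is the reason conditions~(1) and~(2) must be invoked in tandem rather than independently, and it is the only spot where their coupling is essential.
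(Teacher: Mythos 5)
Your proposal is correct and takes essentially the same route as the paper: both decompose $\mathrm{Spec}_M(G\circ H)$ via Corollary~\ref{thm:spec_rg}, invoke Lemma~\ref{thm1} to identify disjointness of the $S(\mu)$ with $\gcd(\phi(M(H),x),\phi(M^u(H),x))=1$, and characterize a multiple root of $\phi(M(H),x)-\mu\,\phi(M^u(H),x)$ by eliminating $\mu$ from the value/derivative equations to land in $\mathcal{R}$ with $\mu=\phi(M(H),\lambda)/\phi(M^u(H),\lambda)$. Your explicit handling of the degenerate case $\phi(M^u(H),\lambda)=0$ is a slightly more careful write-up of the same point the paper dispatches with the one-line observation that $\gcd=1$ forces $\phi(M^u(H),\lambda)\neq 0$ at any multiple root.
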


\begin{proof}
Assume that conditions (1) and (2) hold. By Lemma \ref{thm1}, it suffices to show that for any \(\mu \in \mathrm{Spec}_M(G)\), the polynomial \(\phi(M(H), x) - \mu \phi(M^u(H), x)\) has no multiple roots.  

Suppose, for contradiction, that there exists \(\mu_1 \in \mathrm{Spec}_M(G)\) such that \(\lambda\) is a multiple root of \(\phi(M(H), x) - \mu_1 \phi(M^u(H), x)\). Since \(\gcd(\phi(M(H), x), \phi(M^u(H), x)) = 1\), it follows that \(\phi(M^u(H), \lambda) \neq 0\).  

As \(\lambda\) is a multiple root, we have:
\[
\phi(M(H), \lambda) - \mu_1 \phi(M^u(H), \lambda) = 0,
\]
\[
\phi'(M(H), \lambda) - \mu_1 \phi'(M^u(H), \lambda) = 0.
\]
Eliminating \(\mu_1\) gives:
\[
\phi'(M(H), \lambda) \phi(M^u(H), \lambda) - \phi(M(H), \lambda) \phi'(M^u(H), \lambda) = 0,
\]
which implies \(\lambda \in \mathcal{R}\). 

Moreover, using the first equation, we deduce:
\[
\mu_1 = \frac{\phi(M(H), \lambda)}{\phi(M^u(H), \lambda)}.
\]
This contradicts condition (2), which ensures that \(\mathrm{Spec}_M(G) \cap \left\{\frac{\phi(M(H), x)}{\phi(M^u(H), x)} \mid x \in \mathcal{R}\right\} = \emptyset\).  

Conversely, assume that \(G \circ H\) is \(M\)-separable. By Lemma \ref{thm1}, condition (1) holds. Suppose, for contradiction, that there exists \(\lambda \in \mathcal{R}\) such that:
\[
\frac{\phi(M(H), \lambda)}{\phi(M^u(H), \lambda)} = \mu_1 \in \mathrm{Spec}_M(G).
\]
Then \(\lambda\) is a multiple root of \(\phi(M(H), x) - \mu_1 \phi(M^u(H), x)\). According to Corollary \ref{thm:spec_rg}, this contradicts the \(M\)-separability of \(G \circ H\).
Thus, condition (2) holds, completing the proof.
\end{proof}

By Lemma \ref{thm2}, we can immediately obtain
\begin{cor}\label{cor1}
    Let $G$ be a $ M $-separable graph and $H$ be a rooted graph with $u$ as the root vertex of $H$. Then $G\circ H$ is $ M $-separable if $u$ is the $M$-Wronskian vertex of $H$. 
\end{cor}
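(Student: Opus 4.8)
The plan is to derive this corollary directly from Lemma \ref{thm2}, which gives a necessary and sufficient characterization of when $G \circ H$ is $M$-separable in terms of two conditions. Since the corollary asserts only sufficiency, it suffices to check that both conditions (1) and (2) of Lemma \ref{thm2} hold under the hypotheses that $G$ is $M$-separable and that $u$ is the $M$-Wronskian vertex of $H$.

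First I would verify condition (1). By hypothesis $G$ is $M$-separable, so it remains only to establish that $\gcd(\phi(M(H), x), \phi(M^u(H), x)) = 1$. This is exactly the content of the remark following the definition of the $M$-Wronskian vertex: whenever $u$ is an $M$-Wronskian vertex of $H$, the two characteristic polynomials are coprime (and $H$ is itself $M$-separable). Thus condition (1) is immediate.

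The heart of the argument is condition (2), and here I would observe that the polynomial
\[
\phi(M(H), x) \cdot \phi'(M^u(H), x) - \phi'(M(H), x) \cdot \phi(M^u(H), x),
\]
whose real roots constitute the set $\mathcal{R}$ appearing in Lemma \ref{thm2}, is precisely the Wronskian-type expression from the definition of an $M$-Wronskian vertex. By assumption this expression is nonzero for every real $x$, so it has no real roots; consequently $\mathcal{R} = \emptyset$. The indexing set $\{\phi(M(H), x)/\phi(M^u(H), x) \mid x \in \mathcal{R}\}$ is therefore empty, and its intersection with $\mathrm{Spec}_M(G)$ is trivially empty, so condition (2) holds vacuously. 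Having verified both conditions, Lemma \ref{thm2} yields that $G \circ H$ is $M$-separable.

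I do not expect any serious obstacle: the corollary is an immediate specialization of Lemma \ref{thm2}. The only point requiring genuine attention is recognizing that the defining inequality of the $M$-Wronskian vertex is identical (up to an irrelevant overall sign) to the vanishing condition defining $\mathcal{R}$, so that the Wronskian-vertex hypothesis forces $\mathcal{R}$ to be empty and thereby collapses condition (2) to a triviality. Everything else is a routine invocation of the preceding lemma and the remark accompanying the definition.
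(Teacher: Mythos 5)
Your proposal is correct and matches the paper's route exactly: the paper derives Corollary \ref{cor1} as an immediate consequence of Lemma \ref{thm2}, and your argument simply spells out why both conditions hold---condition (1) from the hypothesis on $G$ together with the coprimality noted in the definition of the $M$-Wronskian vertex, and condition (2) vacuously since the Wronskian expression has no real roots, so $\mathcal{R}=\emptyset$. No gaps; your observation that $\mathcal{R}=\emptyset$ collapses condition (2) is precisely the intended reading.
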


\begin{figure}[h]
\centering
		\begin{tikzpicture}[scale=2,
			Node/.style={fill,circle,scale=.5}
			]

        \begin{scope}[xshift=-0.8cm]
        
		\node[Node,label={[label distance=0pt]0: $v_{1}$}] (0)  at (1.5, 2.5) {};
        \node[Node,label={[label distance=-2pt,left]0: $v_{2}$}] (1)  at (1,2.0 ) {};
        \node[Node,label={[label distance=0pt]0: $v_{3}$}] (2)  at (1.5, 2.0) {};
        \node[Node,label={[label distance=-2pt,left]0: $v_{4}$}] (3)  at (1, 1.5) {};
        \node[Node,label={[label distance=0pt]0: $v_{5}$}] (4)  at (1.5, 1.5) {};
        \node[Node,label={[label distance=0pt]0: $v_{6}$}] (5)  at (1, 1) {};
        \node[label={[label distance=0pt]0: $H_1$}] (6)  at (1.2, 0.8) {};
			\draw[color=black](0)--(1);
			\draw[color=black](1)--(2);
			\draw[color=black](2)--(3);
			\draw[color=black](3)--(4);
			\draw[color=black](0)--(2);
			\draw[color=black](1)--(3);
			\draw[color=black](2)--(4);
			\draw[color=black](4)--(2);
                \draw[color=black](4)--(5);
                \draw[color=black](3)--(5);

        \end{scope}
        \begin{scope}[xshift=0cm]
        
        \node[Node,label={[label distance=0pt]0: $v_{1}$}] (7)  at (2.5, 2.5) {};
        \node[Node,label={[label distance=0pt]0: $v_{2}$}] (8)  at (3,2 ) {};
        \node[Node,label={[label distance=-2pt,left]0: $v_{3}$}] (9)  at (2, 2) {};
        \node[Node,label={[label distance=0pt]0: $v_{4}$}] (10)  at (2.5, 1.5) {};
        \node[Node,label={[label distance=0pt]0: $v_{5}$}] (11)  at (3, 1) {};
        \node[Node,label={[label distance=-2pt,left]0: $v_{6}$}] (12)  at (2, 1) {};
        \node[label={[label distance=0pt]0: $H_2$}] (13)  at (2.3, 0.8) {};
			\draw[color=black](7)--(8);
			\draw[color=black](7)--(9);
			\draw[color=black](8)--(9);
			\draw[color=black](10)--(7);
			\draw[color=black](10)--(8);
			\draw[color=black](10)--(11);
			\draw[color=black](10)--(12);
			\draw[color=black](8)--(11);
                \draw[color=black](9)--(12);
                \draw[color=black](11)--(12);
        
        \end{scope}
        \begin{scope}[xshift=0.8cm]
        
        \node[Node,label={[label distance=0pt]0: $v_{1}$}] (21)  at (4, 2.5) {};
        \node[Node,label={[label distance=0pt]0: $v_{2}$}] (22)  at (4.5,2 ) {};
        \node[Node,label={[label distance=-2pt,left]0: $v_{3}$}] (23)  at (3.5, 2) {};
        \node[Node,label={[label distance=0pt]0: $v_{4}$}] (24)  at (4.5, 1.5) {};
        \node[Node,label={[label distance=-2pt,left]0: $v_{5}$}] (25)  at (3.5, 1.5) {};
        \node[Node,label={[label distance=0pt]0: $v_{6}$}] (26)  at (3.5, 1) {};
        \node[label={[label distance=0pt]0: $H_3$}] (27)  at (3.8, 0.8) {};
			\draw[color=black](21)--(22);
			\draw[color=black](21)--(23);
			\draw[color=black](22)--(23);
			\draw[color=black](23)--(25);
			\draw[color=black](24)--(22);
			\draw[color=black](24)--(25);
			\draw[color=black](26)--(25);

        \end{scope}
        \begin{scope}[xshift=1.5cm]
        
        \node[Node,label={[label distance=0pt]0: $v_{1}$}] (31)  at (5.5, 2.5) {};
        \node[Node,label={[label distance=-2pt,left]0: $v_{2}$}] (32)  at (5,2 ) {};
        \node[Node,label={[label distance=-2pt,below]0: $v_{3}$}] (33)  at (5.5, 2) {};
        \node[Node,label={[label distance=0pt]0: $v_{4}$}] (34)  at (6, 2) {};
        \node[Node,label={[label distance=-2pt,left]0: $v_{5}$}] (35)  at (5, 1.5) {};
        \node[Node,label={[label distance=0pt]0: $v_{6}$}] (36)  at (5.5, 1.5) {};
        \node[Node,label={[label distance=0pt]0: $v_{7}$}] (37)  at (5, 1) {};
        \node[label={[label distance=0pt]0: $H_4$}] (38)  at (5.3, 0.8) {};
			\draw[color=black](31)--(32);
			\draw[color=black](31)--(33);
			\draw[color=black](31)--(34);
			\draw[color=black](32)--(33);
			\draw[color=black](33)--(34);
			\draw[color=black](32)--(35);
			\draw[color=black](32)--(36);
                \draw[color=black](33)--(35);
                \draw[color=black](35)--(36);
                \draw[color=black](37)--(36);
                \draw[color=black](35)--(37);
                \draw[color=black](34)--(36);
            \end{scope}
		\end{tikzpicture}
		\caption{Some graphs with $A$-Wronskian vertex}
\end{figure}
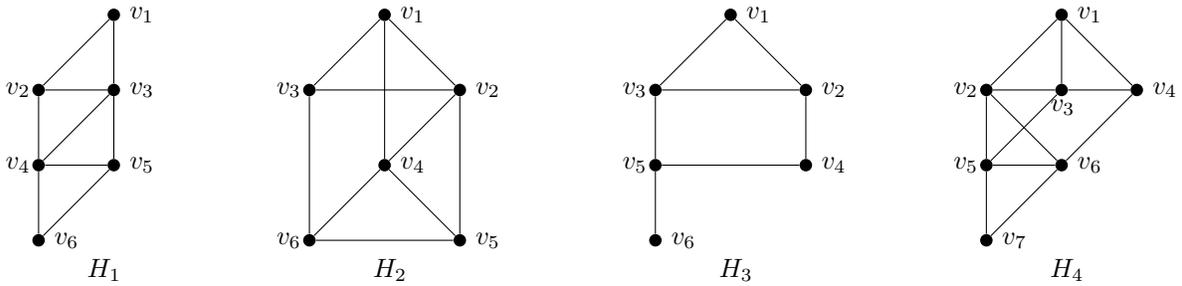

\begin{example}
    We can verify that each vertex of $H_i$ is the $A$-Wronskian vertex of $H_i$ for $i=1,2,3,4$. Let $G$ be an $A$-separable graph. By Corollary \ref{cor1}, we have that $G\circ H_i$, with a vertex of $H_i$ arbitrarily selected as the root vertex, is also $A$-separable for $i=1,2,3,4$.
\end{example}

Next, we aim to characterize the \(M\)-Wronskian vertex of \(H\) using Cauchy's Interlacing Theorem and a lemma from Fisk's work \cite{fisk2008polynomials}. 

Cauchy's Interlacing Theorem describes the eigenvalue relationship between a matrix and its principal submatrices. The theorem is stated as follows:
\begin{theorem}[Cauchy's Interlacing Theorem]\label{cthm}
    Let \(A\) be an \(n\)-order real symmetric matrix, and let \(B\) be an \((n-1)\)-order principal submatrix of \(A\). Denote the eigenvalues of \(A\) as \(\lambda_1 \leqslant \lambda_2 \leqslant \cdots \leqslant \lambda_n\) and those of \(B\) as \(\mu_1 \leqslant \mu_2 \leqslant \cdots \leqslant \mu_{n-1}\). Then, the eigenvalues satisfy the interlacing property:
    $$
    \lambda_1 \leqslant \mu_1 \leqslant \lambda_2 \leqslant \mu_2 \leqslant \cdots \leqslant \mu_{n-1} \leqslant \lambda_n.
    $$
\end{theorem}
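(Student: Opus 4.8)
The plan is to derive the interlacing from the Courant–Fischer variational characterization of the eigenvalues of a real symmetric matrix, which expresses each eigenvalue simultaneously as a min-max and a max-min of the Rayleigh quotient over subspaces of prescribed dimension. Since eigenvalues are invariant under permutation similarity, I first reduce to the case where $B$ is obtained from $A$ by deleting the last row and column. Then I identify $\mathbb{R}^{n-1}$ with the hyperplane $V=\{x\in\mathbb{R}^n : x_n=0\}$ through $y\mapsto(y,0)$; the key elementary observation is that for $x=(y,0)$ one has $x^{\top}Ax=y^{\top}By$ and $x^{\top}x=y^{\top}y$, so the Rayleigh quotient of $A$ restricted to $V$ coincides with that of $B$.

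With this identification, the whole argument is dimension bookkeeping. For the lower bound $\lambda_k\leqslant\mu_k$, I would take $W$ to be the span of the eigenvectors of $A$ for $\lambda_k,\dots,\lambda_n$ (dimension $n-k+1$) and $T\subseteq V$ the span of the eigenvectors of $B$ for $\mu_1,\dots,\mu_k$ (dimension $k$). Because $(n-k+1)+k=n+1>n$, the subspaces $W$ and $T$ meet in a nonzero vector $z$; evaluating the Rayleigh quotient on $z\in W$ gives a value $\geqslant\lambda_k$, while $z\in T\subseteq V$ gives a value $\leqslant\mu_k$, whence $\lambda_k\leqslant\mu_k$. For the upper bound $\mu_k\leqslant\lambda_{k+1}$, I would symmetrically intersect the span of the eigenvectors of $A$ for $\lambda_1,\dots,\lambda_{k+1}$ (dimension $k+1$) with the span in $V$ of the eigenvectors of $B$ for $\mu_k,\dots,\mu_{n-1}$ (dimension $n-k$); the dimensions again sum to $n+1$, so a common nonzero vector exists and yields $\mu_k\leqslant\lambda_{k+1}$. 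Chaining the inequalities $\lambda_k\leqslant\mu_k\leqslant\lambda_{k+1}$ for $k=1,\dots,n-1$ produces exactly the stated interlacing chain.

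The only genuinely delicate point is keeping the inequality directions and the dimension counts aligned: one must pair the ``top'' eigenspace of $A$ with the ``bottom'' eigenspace of $B$ for the lower bound and use the reverse pairing for the upper bound, and check in each case that the two dimensions sum to $n+1$ so that a nontrivial intersection is forced. Everything else --- the reduction to deleting the last coordinate and the equality of Rayleigh quotients on $V$ --- is routine. If one instead prefers to quote Courant–Fischer directly, the same bounds drop out immediately by noting that restricting the competing subspaces to lie inside $V$ shrinks the feasible family, thereby raising a minimum (giving $\mu_k\geqslant\lambda_k$) and lowering a maximum (giving $\mu_k\leqslant\lambda_{k+1}$).
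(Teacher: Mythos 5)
Your proof is correct, but there is nothing in the paper to compare it against: the paper states Cauchy's Interlacing Theorem without proof, quoting it as a classical tool (it is used, together with Fisk's lemma, to show that $u$ is an $M$-Wronskian vertex of $H$ if and only if $\gcd(\phi(M(H),x),\phi(M^u(H),x))=1$). Your argument is the standard Courant--Fischer proof, and the bookkeeping is right at every point that matters: the permutation-similarity reduction to deleting the last row and column is legitimate since principal submatrices correspond under the same permutation; the identification $x=(y,0)$ gives $x^{\top}Ax=y^{\top}By$ and $x^{\top}x=y^{\top}y$, so Rayleigh quotients agree on the hyperplane $V=\{x:x_n=0\}$; and in both intersection arguments the dimensions sum to $(n-k+1)+k=n+1$ and $(k+1)+(n-k)=n+1$, forcing a nonzero common vector, with the pairings (top eigenspace of $A$ against bottom eigenspace of $B$, and conversely) oriented correctly to give $\lambda_k\leqslant\mu_k$ and $\mu_k\leqslant\lambda_{k+1}$. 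One small point worth making explicit if you write this up fully: with repeated eigenvalues you should fix an orthonormal eigenbasis $v_1,\dots,v_n$ with $Av_i=\lambda_i v_i$ and take the spans of the indicated basis vectors, so that the stated dimensions are exact; the Rayleigh bounds on those spans then follow by expanding in that basis. Your closing alternative, quoting Courant--Fischer directly and noting that restricting the competing subspaces to lie in $V$ raises the min-max and lowers the max-min, is also valid and is arguably the shortest complete route. Note also that the paper only needs the non-strict interlacing you prove; the strict chain used later in Theorem 3.5 comes from the additional hypothesis $\gcd(\phi(M(H),x),\phi(M^u(H),x))=1$, not from the interlacing theorem itself.
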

The notation \(f \dotle g\) is used to indicate that the roots of \(f(x)\) and \(g(x)\) satisfy the following interleaving property:
\(a_1 < b_1 < a_2 < b_2 < \cdots < a_n\), where \(a_1, \dots, a_n\) are the roots of \(f(x) = 0\) and \(b_1, \dots, b_m\) are the roots of \(g(x) = 0\).

\begin{lem}\cite{fisk2008polynomials}\label{flem}
    Given two polynomials $f$ and $g$ of degree $n$ and $n-1$ with positive leading coefficients, let $a_1 < a_2 < \cdots < a_n$ be the roots of \(f(x) = 0\). Suppose 
    $$
    g(x)=c_1\frac{f(x)}{x-a_1}+\cdots+c_n\frac{f(x)}{x-a_n},
    $$
    where all \(c_i > 0\). Then \(f \dotle g\).
\end{lem}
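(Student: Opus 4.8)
The plan is to pin down the sign of $g$ at the roots of $f$ and then invoke the Intermediate Value Theorem. First I would record that each summand $\frac{f(x)}{x-a_i}$ is a genuine polynomial of degree $n-1$, equal to $\prod_{j\neq i}(x-a_j)$ up to the leading coefficient of $f$, precisely because the $a_i$ are simple roots of $f$. Summing, $g$ is a polynomial whose leading coefficient is a positive multiple of $\sum_{i=1}^n c_i > 0$; this confirms $\deg g = n-1$ with positive leading coefficient, so $g$ has exactly $n-1$ roots counted with multiplicity.

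The heart of the argument is the evaluation of $g$ at a root $a_k$ of $f$. Whenever $i \neq k$ the polynomial $\frac{f(x)}{x-a_i}$ still retains the factor $(x-a_k)$ and hence vanishes at $x = a_k$, so only the $i=k$ term survives. Writing $f(x) = (x-a_k)\,\frac{f(x)}{x-a_k}$ and differentiating shows $\left.\frac{f(x)}{x-a_k}\right|_{x=a_k} = f'(a_k)$, whence
$$g(a_k) = c_k\, f'(a_k).$$
The sign computation is the step that needs care: since $f$ has positive leading coefficient, $\sgn f'(a_k) = \sgn \prod_{j\neq k}(a_k-a_j)$, and among these factors the $k-1$ with $j<k$ are positive while the $n-k$ with $j>k$ are negative, so $\sgn f'(a_k) = (-1)^{n-k}$. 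As $c_k > 0$, this yields $\sgn g(a_k) = (-1)^{n-k}$, and in particular $g(a_k)\neq 0$ for every $k$.

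Consequently $g$ takes opposite signs at consecutive roots $a_k$ and $a_{k+1}$, so by the Intermediate Value Theorem $g$ has at least one root in each open interval $(a_k, a_{k+1})$ for $k = 1,\dots,n-1$. This already exhibits $n-1$ distinct real roots $b_1 < \cdots < b_{n-1}$ with $a_k < b_k < a_{k+1}$; since $\deg g = n-1$, these are all of the roots of $g$, they are simple, and they strictly interlace the $a_k$. This is exactly the interleaving $a_1 < b_1 < a_2 < \cdots < b_{n-1} < a_n$, i.e.\ the assertion \(f \dotle g\). The only delicate points are the sign bookkeeping for $f'(a_k)$ and the observation $g(a_k)\neq 0$, which is what upgrades the interlacing from weak to strict; beyond these, I expect no real obstacle.
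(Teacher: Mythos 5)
The paper offers no proof of this lemma, citing it directly from Fisk, so there is nothing internal to compare against; your argument, however, is correct and is the standard one: $g(a_k) = c_k f'(a_k)$ with $\sgn f'(a_k) = (-1)^{n-k}$, so $g$ alternates in sign at consecutive roots of $f$, the Intermediate Value Theorem places a root of $g$ in each interval $(a_k, a_{k+1})$, and the degree count $\deg g = n-1$ (valid since $\sum_i c_i > 0$ keeps the leading coefficient positive) shows these $n-1$ roots are all of them, simple, and strictly interlacing. Your sign bookkeeping and the observation $g(a_k) \neq 0$ are exactly the points that make the interlacing strict, so the proof is complete as written.
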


If $u$ is the $M$-Wronskian vertex of $H$, according to the definition, we always have $\gcd(\phi(M(H),x), \phi(M^u(H),x))=1$. Subsequently, we will prove that this is actually the necessary and sufficient condition for $u$ to be the $M$-Wronskian vertex of $H$.

\begin{theorem}\label{nthm1}
    Let $H$ be a rooted graph with root vertex $u$. Then $u$ is the $M$-Wronskian vertex of $H$ if and only if $\gcd(\phi(M(H),x), \phi(M^u(H),x))=1$.
\end{theorem}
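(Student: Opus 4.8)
The forward implication is immediate and is already recorded after the definition: if $x-\lambda$ divides $\gcd(\phi(M(H),x),\phi(M^u(H),x))$, then $\phi(M(H),\lambda)=\phi(M^u(H),\lambda)=0$, so the expression $\phi(M(H),x)\phi'(M^u(H),x)-\phi'(M(H),x)\phi(M^u(H),x)$ vanishes at $\lambda$ and $u$ is not an $M$-Wronskian vertex. So the plan is to prove the converse. Throughout I write $f(x)=\phi(M(H),x)$, $g(x)=\phi(M^u(H),x)$, and $W(x)=f(x)g'(x)-f'(x)g(x)$, so that by definition $u$ is an $M$-Wronskian vertex exactly when $W(x)\neq 0$ for every real $x$.

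The first step is a partial-fraction identity for $g/f$. Since $M^u(H)$ is the principal submatrix of $M(H)$ obtained by deleting row and column $u$, the cofactor formula for the inverse gives $[(xI-M(H))^{-1}]_{uu}=g(x)/f(x)$. Diagonalizing the real symmetric matrix $M(H)$, letting $\lambda_1,\dots,\lambda_k$ be its distinct eigenvalues with orthogonal spectral projectors $P_1,\dots,P_k$, the spectral resolution $(xI-M(H))^{-1}=\sum_i (x-\lambda_i)^{-1}P_i$ yields
$$\frac{g(x)}{f(x)}=\sum_{i=1}^{k}\frac{c_i}{x-\lambda_i},\qquad c_i=e_u^{\top}P_ie_u=\|P_ie_u\|^2\ge 0.$$
Cauchy's Interlacing Theorem \ref{cthm} is the coarse shadow of this identity, and the representation is exactly the shape required by Fisk's Lemma \ref{flem} once the weights are shown to be strictly positive.

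Next I would read off from the hypothesis $\gcd(f,g)=1$ that every eigenvalue of $M(H)$ is simple and every weight $c_i$ is strictly positive. Clearing denominators turns the above into the polynomial identity $g(x)=\sum_{i=1}^{k}c_i\,f(x)/(x-\lambda_i)$. If $\lambda_i$ were a multiple eigenvalue of $M(H)$, then $(x-\lambda_i)$ still divides every summand, whence $g(\lambda_i)=0=f(\lambda_i)$, a common root; and once the eigenvalues are simple, evaluating at $x=\lambda_i$ gives $g(\lambda_i)=c_i\prod_{l\ne i}(\lambda_i-\lambda_l)$, so $c_i=0$ would again force the common root $\lambda_i$. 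Both possibilities contradict $\gcd(f,g)=1$, so the spectrum of $M(H)$ is simple and $c_i>0$ for every $i$ (equivalently, by Fisk's Lemma \ref{flem}, the roots of $f$ and $g$ strictly interlace, refining Cauchy's Theorem \ref{cthm}).

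The final step turns positivity into a sign statement for $W$. Differentiating the partial-fraction identity gives $(g/f)'=-\sum_i c_i/(x-\lambda_i)^2$, and since $(g/f)'=W/f^2$ this is the polynomial identity
$$W(x)=-f(x)^2\sum_{i=1}^{k}\frac{c_i}{(x-\lambda_i)^2}.$$
For $x$ not an eigenvalue we have $f(x)^2>0$ and a strictly positive sum, so $W(x)<0$; at $x=\lambda_i$ the identity $W=fg'-f'g$ reduces to $W(\lambda_i)=-f'(\lambda_i)g(\lambda_i)$, which is nonzero because $\lambda_i$ is a simple root of $f$ (so $f'(\lambda_i)\neq0$) and not a root of $g$ (so $g(\lambda_i)\neq0$). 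Hence $W(x)\neq0$ for all real $x$, and $u$ is an $M$-Wronskian vertex. I expect the main obstacle to be the bookkeeping in the middle step, namely extracting the strict positivity $c_i>0$ together with the simplicity of the spectrum from $\gcd(f,g)=1$ (equivalently, upgrading Cauchy's weak interlacing to the strict interlacing demanded by Fisk's Lemma); the concluding sign computation is then essentially automatic.
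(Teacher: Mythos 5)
Your proof is correct, but it reaches the key positive partial-fraction expansion by a genuinely different route than the paper. The paper first uses Cauchy's Interlacing Theorem (Theorem \ref{cthm}) together with $\gcd(f,g)=1$ (where $f=\phi(M(H),x)$, $g=\phi(M^u(H),x)$) to upgrade weak interlacing to strict interlacing of the roots, and then invokes Fisk's Lemma \ref{flem} to write $g=\sum_i a_i f/(x-\lambda_i)$ with all $a_i>0$; strictly speaking, Lemma \ref{flem} as stated runs from positive coefficients to interlacing, so the paper implicitly uses its (true, but not literally cited) converse. You bypass both results: the cofactor identity $[(xI-M(H))^{-1}]_{uu}=g(x)/f(x)$ combined with the spectral resolution of the symmetric matrix $M(H)$ gives the expansion directly, with explicit weights $c_i=\lVert P_ie_u\rVert^2\ge 0$, and you then extract simplicity of the spectrum and strict positivity $c_i>0$ from $\gcd(f,g)=1$ by clearing denominators and evaluating at each $\lambda_i$ --- re-deriving strict interlacing as a byproduct rather than assuming it as input. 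From that point the two arguments coincide exactly: differentiate $g/f$ to get $W=fg'-f'g=-f^2\sum_i c_i/(x-\lambda_i)^2<0$ off $\spec_M(H)$, and handle the eigenvalues via $W(\lambda_i)=-f'(\lambda_i)g(\lambda_i)\neq 0$. What your route buys: it is self-contained linear algebra needing neither interlacing lemma, it sidesteps the direction issue in the appeal to Fisk, and the identity it yields gives $W(\lambda_i)=-c_if'(\lambda_i)^2<0$, hence the slightly stronger conclusion $W(x)<0$ for \emph{all} real $x$, in the spirit of Remark \ref{re1} (which records exactly this for $M=A$ via Godsil's identity). What the paper's route buys: it stays entirely at the level of characteristic polynomials and their roots, presenting the theorem as a clean instance of classical interlacing theory.
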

\begin{proof}
First, assume that $u$ is the $M$-Wronskian vertex of $H$. By definition, we have
$$
\phi(M(H), x) \phi'(M^u(H), x)-\phi'(M(H), x)  \phi(M^u(H), x) \neq 0
$$
for all real $x$. This implies that $\gcd(\phi(M(H),x),\phi'(M(H),x))=1$ and $\gcd(\phi(M(H),x),\phi(M^u(H),x))=1$.

Conversely, assume that $\gcd(\phi(M(H),x), \phi(M^u(H),x)) = 1$. Then we have that $H$ is $M$-separable by Theorem \ref{cthm}. By $H$ is $M$-separable, we can assume the eigenvalues of $M(H)$ be $\lambda_1<\lambda_2<\cdots<\lambda_m$ and the eigenvalues of $M^u(H)$ be $\mu_1\leqslant\mu_2\leqslant\cdots\leqslant\mu_{m - 1}$. And according to $\gcd(\phi(M(H),x), \phi(M^u(H),x))=1$, we have $\lambda_i\neq \mu_j$ for any $i=1,2\cdots,m,\ j=1,2,\cdots,m-1$. Thus by  Theorem \ref{cthm} we have 
    $$
\lambda_1<\mu_1<\lambda_2<\mu_2<\cdots<\mu_{m - 1}<\lambda_m,
$$
which implies that $\phi(M(H),x)\ \dotle \ \phi(M^u(H),x)$. Then by Lemma \ref{flem} we can write $$\phi(M^u(H),x)=\sum_{i=1}^m a_i\frac{\phi(M(H),x)}{x-\lambda_i},$$ where $a_i>0$ for $i=1,2,\cdots,m$. Now, consider the derivative of $h(x)=\frac{\phi(M^u(H),x)}{\phi(M(H),x)}$.

$$h'(x)=\frac{  \phi(M(H), x) \phi'(M^u(H), x)-\phi'(M(H), x)  \phi(M^u(H), x)}{(\phi(M(H),x))^2}=-\sum_{i=1}^m\frac{a_i}{(x-\lambda_i)^2}<0.$$
Hence, $\phi(M(H), x) \phi'(M^u(H), x)-\phi'(M(H), x)  \phi(M^u(H), x)<0$  for $x\notin \spec_M(H)$. 

Moreover, since $\gcd(\phi(M(H),x), \phi(M^u(H),x))=1$ and $H$ is $M$-separable, for any $\lambda_{i}\in \spec_M(H)$,  we have$$\phi(M(H), x) \phi'(M^u(H), x)-\phi'(M(H), x)  \phi(M^u(H), x)=-\phi'(M(H), \lambda_i)  \phi(M^u(H), \lambda_i)\neq 0.$$
Combining these results, we conclude that $$\phi(M(H), x) \phi'(M^u(H), x)-\phi'(M(H), x)  \phi(M^u(H), x)\neq 0$$ for all real $x$, then $u$ is the $M$-Wronskian vertex of $H$.
\end{proof}
\begin{remark}\label{re1}
Let \( G \) be a graph of order \( n \) with vertex set \( V(G) = \{u_1, u_2, \dots, u_n\} \). According to Corollary 4.2 in \cite[Page 60]{godsil2017algebraic}, the following identity holds:  
\[
\phi'(A(H), x) \phi(A^{u_i}(H), x)-\phi(A(H), x) \phi'(A^{u_i}(H), x)   = \sum_{k=1}^n \phi_{ik}(G, x)^2,
\]  
where \(\phi_{ij}(G, x)\) denotes the \(ij\)-entry of \(\operatorname{adj}(xI - A(G))\). This implies that  
\[
\phi(A(H), x) \phi'(A^{u_i}(H), x) - \phi'(A(H), x) \phi(A^{u_i}(H), x) \leq 0 \quad \text{for all } x \in \mathbb{R}.
\]
\end{remark}

Furthermore, when $C=I_m$, $G\circ_C H$ turns into the Cartesian product, we have the following result:
\begin{theorem}
  Let $G$ and $H$ be two graphs of order $n$ and $m$, respectively. $K=\{\lambda_1-\lambda_2|\lambda_1,\lambda_2\in \mathrm{Spec}_M(H), \lambda_1\neq \lambda_2\}$, $L=\{\mu_1-\mu_2|\mu_1,\mu_2\in \mathrm{Spec}_M(G),\mu_1\neq\mu_2\}$. Then $G\circ_{I_m} H$ is $M$-separable if and only if both of the following two conditions hold
    \begin{enumerate}[(1).]
        \item $G$ and $H$ are $M$-separable,
        \item $ K\cap L=\emptyset$.
    \end{enumerate}
\end{theorem}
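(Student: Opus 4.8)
The plan is to reduce everything to the spectral description provided by Theorem \ref{cthm2} in the special case $C = I_m$. First I would observe that here $B(\mu) = M(H) + \mu I_m$ is merely a scalar shift of $M(H)$, so its eigenvalues are exactly $\{\lambda + \mu : \lambda \in \mathrm{Spec}_M(H)\}$; that is, $S(\mu)$ is the $M$-spectrum of $H$ translated by $\mu$. Substituting this into the union formula of Theorem \ref{cthm2} yields
\[
\mathrm{Spec}_M(G \circ_{I_m} H) = \{\, \mu_i + \lambda_j : \mu_i \in \mathrm{Spec}_M(G),\ \lambda_j \in \mathrm{Spec}_M(H) \,\},
\]
a multiset with $nm$ entries (each arising from a basis vector $\eta_{ij}\otimes\xi_j$). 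The graph $G \circ_{I_m} H$ is $M$-separable precisely when all $nm$ of these sums are pairwise distinct, so the whole theorem becomes a statement about this sumset.

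Next I would characterize a collision. Two entries coincide, $\mu_{i_1} + \lambda_{j_1} = \mu_{i_2} + \lambda_{j_2}$ with $(i_1,j_1)\neq(i_2,j_2)$, exactly when $\mu_{i_1} - \mu_{i_2} = \lambda_{j_2} - \lambda_{j_1}$. I would split into three cases according to which indices agree. If $i_1 = i_2$ (forcing $j_1 \neq j_2$) the identity gives $\lambda_{j_1} = \lambda_{j_2}$, a repeated $M$-eigenvalue of $H$; symmetrically $j_1 = j_2$ gives a repeated $M$-eigenvalue of $G$; and if $i_1 \neq i_2$ and $j_1 \neq j_2$ the identity says a nonzero element $\mu_{i_1} - \mu_{i_2} \in L$ equals a nonzero element $\lambda_{j_2} - \lambda_{j_1} \in K$, i.e.\ $K \cap L \neq \emptyset$. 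Here I would note that $K$ and $L$ are each symmetric under negation, which keeps the sign bookkeeping harmless when matching a difference in $L$ to one in $K$.

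Both implications then follow from this case analysis. For sufficiency, assuming (1) and (2), none of the three collision types can occur: the first two are excluded by $M$-separability of $H$ and of $G$, and the third by $K \cap L = \emptyset$; hence all sums are distinct and $G \circ_{I_m} H$ is $M$-separable. For necessity, assuming $G \circ_{I_m} H$ is $M$-separable, a repeated eigenvalue of $G$ or of $H$ would immediately create a collision of the corresponding type by pairing it with the same partner eigenvalue from the other factor, which establishes (1); and a common value in $K \cap L$ would produce a type-three collision, which establishes (2).

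The argument is essentially bookkeeping once the spectrum is recognized as the sumset, and I do not anticipate a genuine obstacle. The one point requiring care is the necessity of (1): I must confirm that when, say, $H$ has a repeated eigenvalue $\lambda_{j_1}=\lambda_{j_2}$, the equal sums $\mu_i + \lambda_{j_1}$ and $\mu_i + \lambda_{j_2}$ truly count as two distinct entries of the multiset (which they do, since they come from distinct basis vectors $\eta_{ij}\otimes\xi_j$), so that they genuinely violate separability rather than being identified. Stating the negation-symmetry of $K$ and $L$ explicitly is the other small item needed so that the type-three matching does not encounter a sign mismatch.
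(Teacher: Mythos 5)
Your proof is correct and takes essentially the same route as the paper's: both exploit that $B(\mu)=M(H)+\mu I_m$ is a scalar shift, so $\mathrm{Spec}_M(G\circ_{I_m}H)$ is the multiset of sums $\mu_i+\lambda_j$, and both reduce $M$-separability to the absence of collisions among these sums, with a common value in $K\cap L$ producing exactly the paper's collision $\lambda_{i_1}+\mu_{j_2}=\lambda_{i_2}+\mu_{j_1}$. If anything, your three-case bookkeeping is slightly more explicit than the paper at one spot: the paper disposes of the necessity of condition (1) by citing its earlier separability criterion, whereas you spell out why a repeated eigenvalue of $G$ (not just of $H$) forces a genuine multiset collision via distinct basis vectors $\eta_{ij}\otimes\xi_j$.
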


\begin{proof}
    Let $B(\mu)=M(H)+\mu I_n$, then $\mathrm{Spec}(B(\mu))=\{\lambda+\mu|\lambda\in \mathrm{Spec}_M(H)\}$. Assume that conditions (1) and (2) hold. Then $B(\mu)$ has distinct eigenvalues for $\mu\in\mathbb{R}$. By Theorem \ref{1thm1}, it suffices to show that for any $\mu_1,\mu_2\in \mathrm{Spec}_M(G)$, $\mu_1\neq \mu_2$, $\mathrm{Spec}(B(\mu_1))\cap \mathrm{Spec}(B(\mu_2))=\emptyset$. Suppose, for contradiction, that there exist $\mu_1,\mu_2\in \mathrm{Spec}_M(G),$ such that $\mu_1\neq \mu_2$ and $\mathrm{Spec}(B(\mu_1))\cap \mathrm{Spec}(B(\mu_2))\neq \emptyset$, then $\mu_1-\mu_2\in K$. This contradicts condition (2), which ensures that for any $\mu_1,\mu_2\in \mathrm{Spec}_M(G)$, $\mu_1\neq \mu_2$, $\mathrm{Spec}(B(\mu_1))\cap \mathrm{Spec}(B(\mu_2))=\emptyset$.

    Conversely, assume that $G\circ_{I_m} H$ is $M$-separable. By Theorem \ref{1thm1}, condition (1) holds. Suppose, for contradiction, that there exists $a\in  K\cap L$, then assume $\lambda_{i_1}-\lambda_{i_2}=\mu_{j_1}-\mu_{j_2}=a$ where $\lambda_{i_1},\lambda_{i_2}\in \mathrm{Spec}_M(H),\mu_{j_1},\mu_{j_2}\in \mathrm{Spec}_M(G)$. We have $\lambda_{i_1}+\mu_{j_2}=\lambda_{i_2}+\mu_{j_1}\in \mathrm{Spec}(B(\mu_{j_1}))\cap \mathrm{Spec}(B(\mu_{j_2}))$, then $\lambda_{i_2}+\mu_{j_1}$ is a multiple root of $G\circ_{I_m} H$. This contradicts that $G\circ_{I_m} H$ is $M$-separable. Thus, condition (2) holds, completing the proof.
\end{proof}

The proofs of the results in Sections 2 and 3 do not rely on the edge weights of the graphs being equal to $1$. Hence, the corresponding results are valid for weighted graphs. Combining Theorem \ref{nthm1} and Lemma \ref{thm1}, we derive the following result:
\begin{theorem}\label{nthm2}
 Let $G$ and $H$ be two weighted graphs, and $u$ be the root vertex of $H$. Then $G\circ H$ is $M$-separable if and only if $G$ is $M$-separable and $u$ is $M$-Wronskian vertex of $H$.
\end{theorem}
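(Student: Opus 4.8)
The plan is to prove the equivalence by assembling the characterization already established in Lemma \ref{thm2} with the gcd-criterion for the $M$-Wronskian vertex from Theorem \ref{nthm1}, and to notice that the very \emph{definition} of the $M$-Wronskian vertex makes the auxiliary condition (2) of Lemma \ref{thm2} automatic. Since the proofs in Sections 2 and 3 never used that the edge weights equal $1$, both Lemma \ref{thm2} and Theorem \ref{nthm1} apply verbatim to the weighted graphs $G$ and $H$.

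First I would recall that by Lemma \ref{thm2}, $G\circ H$ is $M$-separable if and only if (1) $G$ is $M$-separable and $\gcd(\phi(M(H),x),\phi(M^u(H),x))=1$, and (2) $\mathrm{Spec}_M(G)$ is disjoint from the value set $\left\{\frac{\phi(M(H),x)}{\phi(M^u(H),x)}\mid x\in\mathcal{R}\right\}$, where $\mathcal{R}$ is the set of real roots of the Wronskian-type polynomial $\phi(M(H),x)\phi'(M^u(H),x)-\phi'(M(H),x)\phi(M^u(H),x)$. The whole argument then reduces to reconciling these two conditions with the single assertion ``$G$ is $M$-separable and $u$ is the $M$-Wronskian vertex of $H$''.

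For the direction $(\Leftarrow)$, assume $G$ is $M$-separable and $u$ is the $M$-Wronskian vertex of $H$. By definition of the $M$-Wronskian vertex the Wronskian-type polynomial has no real zero, i.e.\ $\mathcal{R}=\emptyset$, so condition (2) of Lemma \ref{thm2} holds vacuously; meanwhile Theorem \ref{nthm1} turns ``$u$ is the $M$-Wronskian vertex'' into ``$\gcd=1$'', giving condition (1). Lemma \ref{thm2} then yields that $G\circ H$ is $M$-separable (this recovers Corollary \ref{cor1}). For the direction $(\Rightarrow)$, assume $G\circ H$ is $M$-separable. Lemma \ref{thm2} immediately supplies condition (1), namely that $G$ is $M$-separable and $\gcd(\phi(M(H),x),\phi(M^u(H),x))=1$, and applying Theorem \ref{nthm1} in the reverse direction converts the gcd condition into ``$u$ is the $M$-Wronskian vertex of $H$,'' completing the equivalence.

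The one point I would flag as the crux is the observation that being an $M$-Wronskian vertex is \emph{exactly} the statement $\mathcal{R}=\emptyset$; this is what collapses the two separate requirements of Lemma \ref{thm2} into the single pair ``$G$ separable'' and ``$\gcd=1$,'' after which Theorem \ref{nthm1} bridges the gcd condition to the Wronskian-vertex condition. Once this is noticed, no new estimate, interlacing argument, or combinatorial computation is required — the result is purely a synthesis of the earlier lemmas.
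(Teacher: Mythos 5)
Your proof is correct and takes essentially the same route as the paper: the forward direction combines the separability characterization of Section 3 with the gcd criterion of Theorem \ref{nthm1}, and the reverse direction is exactly Corollary \ref{cor1} (which you re-derive from Lemma \ref{thm2} by noting that the Wronskian-vertex definition makes $\mathcal{R}=\emptyset$, precisely how the paper obtained that corollary), with the same remark that the arguments never use unit edge weights.
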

\begin{proof}
    If $G\circ H$ is $M$-separable, from Theorem \ref{nthm1} and Lemma \ref{thm1}, $G$ is $M$-separable. Additionally, we have $(\phi(M(H),x), \phi(M^u(H),x))=1$, then by Theorem \ref{nthm1}, $u$ is $M$-Wronskian vertex of $H$.
    
    If $G$ is $M$-separable and $u$ is $M$-Wronskian vertex of $H$, then from Corollary \ref{cor1}, $G\circ H$ is $M$-separable.
\end{proof}


\section{$M$-Wronskian vertex of $G_v^n$}
Recall that $G_v^n$ is the graph obtained from $G$ by adding a pendant path of length $n$ at $v$ and denote $u_n$ as the pendant vertex of the pendant path. Consider the universal adjacency matrix $U(G_v^n)=aA(G_v^n)+dD(G_v^n)$, where $a,d\in \mathbb{R}$. Let $f_n(x)$ and $g_n(x)$ be the characteristic polynomial of $U(G_v^n)$ and $U^{u_n}(G_v^n)$, respectively. And we stipulate that $g_0(x)$ is the characteristic polynomial of $U^v(G)$. Then by Laplace expansion, we have
\begin{align*}
    g_n(x)&=(x - 2d)g_{n - 1}(x)-a^2g_{n - 2}(x)\ \  \text{for}\ n\geq 2,\\
    f_n(x)&=(x - d)g_n(x)-a^2g_{n - 1}(x)\ \ \text{for}\ n\geq 1.
\end{align*}
Now we prove the following theorem.

\begin{theorem}\label{nthm3}
    If $u_1$ is the $U$-Wronskian vertex of $G_v^1$, then $u_n$ is the $U$-Wronskian vertex of $G_v^n$.
\end{theorem}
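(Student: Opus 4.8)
The plan is to translate the Wronskian condition into a statement about polynomial gcd's and then exploit the two given three-term recurrences as a disguised Euclidean algorithm. By Theorem \ref{nthm1}, \(u_n\) is the \(U\)-Wronskian vertex of \(G_v^n\) if and only if \(\gcd(f_n, g_n) = 1\), where \(f_n = \phi(U(G_v^n), x)\) and \(g_n = \phi(U^{u_n}(G_v^n), x)\); likewise the hypothesis that \(u_1\) is the \(U\)-Wronskian vertex of \(G_v^1\) is precisely \(\gcd(f_1, g_1) = 1\). So the whole theorem reduces to the purely algebraic implication \(\gcd(f_1, g_1) = 1 \Rightarrow \gcd(f_n, g_n) = 1\), which I would prove for every \(n \geq 1\). (It is worth recording why the two recurrences are genuinely different and why \(g_n \neq f_{n-1}\): deleting \(u_n\) from \(G_v^n\) leaves \(u_{n-1}\) with its degree-two diagonal entry \(2d\) intact, so \(U^{u_n}(G_v^n)\) is not \(U(G_v^{n-1})\). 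This is already encoded in the stated recurrences, so it poses no obstacle for the proof itself.)

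The engine is the observation that each recurrence lets a common divisor ``step down'' one index, and that this step is reversible, so gcd's are preserved. Concretely I would establish three facts, each using only that \(a \neq 0\) (guaranteed for the universal adjacency matrix). First, from \(f_n = (x-d)g_n - a^2 g_{n-1}\), any common divisor of \(f_n\) and \(g_n\) divides \(a^2 g_{n-1}\) and hence \(g_{n-1}\), so \(\gcd(f_n, g_n) \mid \gcd(g_n, g_{n-1})\). Second, from \(g_k = (x-2d)g_{k-1} - a^2 g_{k-2}\) (valid for \(k \geq 2\)), eliminating in both directions gives \(\gcd(g_k, g_{k-1}) = \gcd(g_{k-1}, g_{k-2})\); iterating from \(k=n\) down to \(k=2\) yields \(\gcd(g_n, g_{n-1}) = \gcd(g_1, g_0)\), where the convention \(g_0 = \phi(U^v(G), x)\) supplies the bottom of the ladder. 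Third, from \(f_1 = (x-d)g_1 - a^2 g_0\), any common divisor of \(g_1\) and \(g_0\) divides \(f_1\), so \(\gcd(g_1, g_0) \mid \gcd(f_1, g_1)\). Chaining these, \(\gcd(f_n, g_n) \mid \gcd(g_n, g_{n-1}) = \gcd(g_1, g_0) \mid \gcd(f_1, g_1) = 1\), forcing \(\gcd(f_n, g_n) = 1\).

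I would treat \(n = 1\) as the trivial base (it is just the hypothesis) and invoke the chain only for \(n \geq 2\), being careful that the recurrence for \(g_k\) is quoted only in its valid range \(k \geq 2\). The only genuinely load-bearing hypothesis is \(a \neq 0\): it is exactly what lets me pass from \(p \mid a^2 g\) to \(p \mid g\), and without it the step-down would fail. I do not anticipate a serious obstacle once the problem is recast through Theorem \ref{nthm1}; the argument is a clean telescoping of gcd's in the style of the Euclidean algorithm, and the only thing to watch is the index bookkeeping and the correct placement of the base case \(g_0\).
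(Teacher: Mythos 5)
Your proposal is correct and takes essentially the same route as the paper: both reduce the claim via Theorem \ref{nthm1} to the gcd statement $\gcd(f_n,g_n)=1$ and then apply Euclidean-algorithm steps to the two given recurrences, relying only on $a\neq 0$ to discard the factor $a^2$. The sole difference is presentational: the paper inducts on $n$, using the combined identity $g_n=f_{n-1}-d\,g_{n-1}$ to pass from $\gcd(f_n,g_n)$ to $\gcd(f_{n-1},g_{n-1})$ at each level, whereas you telescope $\gcd(g_k,g_{k-1})$ down to $\gcd(g_1,g_0)$ using the pure $g$-recurrence and reconnect to $\gcd(f_1,g_1)$ only at the bottom via $f_1=(x-d)g_1-a^2g_0$ --- the same chain of Euclidean steps arranged differently.
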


\begin{proof}
By Theorem \ref{nthm1}, since $u_1$ is the $U$-Wronskian vertex of $G_v^1$, we have $\gcd(f_1(x),g_1(x)) = 1$. Now we only need to prove that $\gcd(f_n(x),g_n(x)) = 1$ for $n\geq 2$.

We proceed by induction on $n$. From the recurrence relations of $f_n(x)$ and $g_n(x)$, for $n=2$ we have
$$
\gcd(f_2(x),g_2(x))=\gcd(g_1(x),g_2(x))=\gcd(g_1(x),f_1(x)-d g_1(x))=\gcd(g_1(x),f_1(x)) = 1.
$$
Suppose $n\geq 3$ and assume that $\gcd(f_{n - 1}(x),g_{n - 1}(x)) = 1$. Then, we have 
\begin{align*}
    \gcd(f_n(x),g_n(x))&=\gcd((x - d)g_n(x)-a^2g_{n - 1}(x),g_n(x))\\
    &=\gcd(g_{n - 1}(x),g_n(x))\\
    &=\gcd(g_{n - 1}(x),f_{n - 1}(x)-d g_{n - 1}(x))\\
    &=\gcd(f_{n - 1}(x),g_{n - 1}(x)) = 1.
\end{align*}
Thus, by the principle of induction, we have completed the proof.
\end{proof}

Theorem \ref{nthm3} actually covers all the cases where $M\in \{A,Q,L,A_\alpha,U\}$, then we get
\begin{cor}\label{ncor1}
    If $u_1$ is the $M$-Wronskian vertex of $G_v^1$, then $u_n$ is the $M$-Wronskian vertex of $G_v^n$.
\end{cor}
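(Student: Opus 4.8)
The plan is to observe that Theorem \ref{nthm3} is already proved for the universal adjacency matrix $U = aA + dD$ with entirely \emph{arbitrary} real constants $a \neq 0$ and $d$, and that each of the remaining matrices $A$, $L$, $Q$, $A_\alpha$ is nothing but $U$ for a specific admissible choice of $(a,d)$. Thus the corollary should require no new computation: it is a matter of checking that the induction in Theorem \ref{nthm3} never used any special value of $a$ or $d$, and then specializing.

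First I would record the explicit identifications: $A = U$ with $(a,d) = (1,0)$; $L = D - A = U$ with $(a,d) = (-1,1)$; the signless Laplacian $Q = D + A = U$ with $(a,d) = (1,1)$; and $A_\alpha = \alpha D + (1-\alpha)A = U$ with $(a,d) = (1-\alpha,\alpha)$. In every case the coefficient $a$ of the adjacency part is nonzero (for $A_\alpha$ one takes $\alpha \in [0,1)$), so each choice is a genuine universal adjacency matrix and falls under Theorem \ref{nthm3}.

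Next I would verify that the two recurrences for $g_n(x)$ and $f_n(x)$ used in the proof of Theorem \ref{nthm3} hold for every real $a \neq 0$ and $d$. These come purely from a Laplace (cofactor) expansion along the row and column of the pendant vertex $u_n$, using only that the internal path vertices $u_1, \dots, u_{n-1}$ have degree $2$ (diagonal entry $2d$) and the pendant vertex $u_n$ has degree $1$ (diagonal entry $d$), with off-diagonal weight $a$ on each path edge. Crucially, deleting $u_n$ leaves the diagonal entry $2d$ at $u_{n-1}$ intact, which is exactly what the $g_n$-recurrence encodes; this structural fact is independent of the numerical values of $a$ and $d$. Hence the base case $\gcd(f_2,g_2) = \gcd(g_1,f_1) = 1$ and the inductive step $\gcd(f_n,g_n) = \gcd(f_{n-1},g_{n-1})$ go through verbatim for any admissible pair $(a,d)$.

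Finally, for a fixed $M \in \{A,L,Q,A_\alpha,U\}$ I would substitute the corresponding $(a,d)$ and invoke Theorem \ref{nthm3}: if $u_1$ is the $M$-Wronskian vertex of $G_v^1$, then by Theorem \ref{nthm1} this is equivalent to $\gcd(\phi(M(G_v^1),x),\phi(M^{u_1}(G_v^1),x)) = 1$, the theorem yields $\gcd(\phi(M(G_v^n),x),\phi(M^{u_n}(G_v^n),x)) = 1$, and applying Theorem \ref{nthm1} once more shows $u_n$ is the $M$-Wronskian vertex of $G_v^n$. The proof is essentially immediate once the specialization is set up, so there is no hard estimate to carry out; the only point demanding care is the interpretation of $Q$. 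For the argument to run, $Q$ must be expressible in the form $aA + dD$, so I read it as the signless Laplacian $D + A$, consistent with the $Q$-spectrum results of Lou et al.\ cited earlier; the genuinely normalized Laplacian $I - D^{-1/2}AD^{-1/2}$ lies outside the universal adjacency framework and would require a separate treatment. This verification of the admissible parameter range is the main (and essentially only) obstacle.
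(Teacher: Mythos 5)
Your proposal is correct and is essentially the paper's own argument: the paper obtains Corollary \ref{ncor1} from Theorem \ref{nthm3} by exactly this specialization, remarking in a single line that the universal adjacency case $U=aA+dD$ with arbitrary $a\neq 0$ and $d$ ``covers all the cases'' $M\in\{A,L,Q,A_\alpha,U\}$, the point being precisely what you verify --- that the recurrences for $f_n$ and $g_n$ and the induction are independent of the values of $a$ and $d$. Your explicit parameter identifications, including reading $Q$ as the signless Laplacian $D+A$ (which the paper's computations, e.g.\ the trace of $\phi_Q(H_7,x)$, confirm despite the introduction's ``normalized Laplacian'' wording), merely spell out what the paper leaves implicit.
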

Similar to Theorem \ref{nthm2}, Corollary \ref{ncor1} also holds for weighted graphs.

\begin{example}
    
From $\phi(U(P_2),x)=(x-d)^2-a^2$ and $\phi(U^v(P_2),x)=x-d$ where $v$ is a vertex of $P_2$, we can deduce that  $$\gcd(\phi(U(P_2),x),\phi(U^u(P_2),x))=1$$ for $a\neq 0$, This implies that every vertex of $P_2$ is the $U$-Wronskian vertex of $P_2$, then by Theorem \ref{nthm3}, if $u$ is a pendant vertex of $P_n$, then $u$ is also the $M$-Wronskian vertex of $P_n$.
\end{example}
\begin{example}
Using Corollary \ref{ncor1} in $H_3$, we have that $u_n$ is the $A$-Wronskian vertex of $(H_3)_{v_6}^n$ for $n\geq 1$.
\end{example}
\begin{example}\label{ex4}
Let $G_1 (n,k)$ be the connected graph of order $n$ with diameter $n-2$, which can be obtained by taking a path $P_{n-1}$ with $n-1$ vertices enumerated by $v_1, v_2, \cdots, v_{n-1}$ sequentially and by connecting vertex enumerated $v_n$ to a vertex enumerated $v_k$. Then by computation, $\phi_A(G_1(4,3),x)=x^4-3x^2+1$, $\phi_A(G_1(5,3),x)=x^5-4x^3+2x$, we can verify that $v_4$ is the $A$-Wronskian vertex of $G_1(5,3)$, then by Corollary \ref{ncor1}, we have $v_{n-1}$ is the $A$-Wronskian vertex of $G_1(n,3)$ for $n\geq 5$.
\end{example}
As is demonstrated in Examples 3 and 4, we have that $(H_3)_{v_6}^n$ and $G_1(n,3)$ are $A$-separable graphs. Let $u_n$, $v_{n-1}$ be the root vertex of $(H_3)_{v_6}^n$, $G_1(n,3)$, respectively, and $G$ be an $A$-separable graph, then $G\circ (H_3)_{v_6}^n$ and $G\circ G_1(n,3)$ are $A$-separable.

\begin{example}
By computation, 
\begin{align*}
    \phi(A_{\alpha}(H_5),x)=&x^6 - 16\alpha x^5 + (96\alpha^2 + 16\alpha - 8) x^4 + (-264\alpha^3 - 178\alpha^2 + 98\alpha - 6)x^3\\
    &+ (320\alpha^4 + 660\alpha^3 - 355\alpha^2 + 6\alpha + 8)x^2 + (-126\alpha^5 - 908\alpha^4 + 352\alpha^3 + 154\alpha^2 - 72\alpha + 6)x \\
    &+ 336\alpha^5 + 63\alpha^4 - 282\alpha^3 + 111\alpha^2 - 12\alpha,\\
    \phi(A_{\alpha}^{v_6}(H_5),x)=&x^5 - 15 \alpha x^4 + (82\alpha ^2 + 14\alpha  - 7)x^3 + (-194\alpha ^3 - 140\alpha ^2 + 79\alpha  - 6)x^2\\
    &+ (174\alpha ^4 + 434\alpha ^3 - 253\alpha ^2 + 20\alpha  + 3)x - 21\alpha ^5 - 406\alpha ^4 + 211\alpha ^3 + 18\alpha ^2 - 20\alpha  + 2.
\end{align*}
Using Mathematica, we analyzed the bivariate polynomial 
$$
W(x, \alpha) = W(\phi(A_{\alpha}(H_5), x), \phi(A_{\alpha}^{v_6}(H_5), x)).
$$ 
Within the interval $\alpha \in [0, 1)$, there are exactly eight specific values of $\alpha$, including $\alpha = \frac{2}{3}$, such that $W(x, \alpha) = 0$ for some $x \in \mathbb{R}$. For all other $\alpha$ values in $[0, 1)$, the polynomial satisfies $W(x, \alpha) < 0$ for every $x \in \mathbb{R}$. This implies that $v_6$ is an $A_{\alpha}$-Wronskian vertex of $H_5$ for all $\alpha$ except those eight values. Subsequently, according to Corollary \ref{ncor1},it follows that $u_n$ is the $A_{\alpha}$-Wronskian vertex of $(H_5)_{v_6}^n$ for all $\alpha$ except for those eight values.

\end{example}
\begin{example}
By computation, 
\begin{align*}
    \phi(L(H_6),x)&=x^{6} - 16x^{5} + 95x^{4} - 256x^{3} + 305x^{2} - 126x,\\
    \phi(L^{v_6}(H_6),x)&=x^{5} - 15x^{4} + 81x^{3} - 186x^{2} + 159x - 21.
\end{align*}
We can verify that $v_6$ is the  $L$-Wronskian vertex of $H_6$, then by Corollary \ref{ncor1}, we have $u_n$ is the $L$-Wronskian vertex of $(H_6)_{v_6}^n$ for $n\geq 1$.

\end{example}
\begin{figure}[h]
\centering
\begin{tikzpicture}[scale=2,
            Node/.style={fill,circle,scale=.5}
            ]

        \begin{scope}
    \node[Node,label={[label distance=0pt]0: $v_{1}$}] (21)  at (4, 2.5) {};
    \node[Node,label={[label distance=0pt]0: $v_{2}$}] (22)  at (4.5,2 ) {};
    \node[Node,label={[label distance=-2pt,left]0: $v_{3}$}] (23)  at (3.5, 2) {};
    \node[Node,label={[label distance=0pt]0: $v_{4}$}] (24)  at (4.5, 1.5) {};
    \node[Node,label={[label distance=-2pt,left]0: $v_{5}$}] (25)  at (3.5, 1.5) {};
    \node[Node,label={[label distance=0pt]0: $v_{6}$},fill=red] (26)  at (3.5, 1) {};
    \node[label={[label distance=0pt]0: $H_5$}] (27)  at (3.8, 0.8) {};
    \draw[color=black](21)--(22);
    \draw[color=black](21)--(23);
    \draw[color=black](22)--(23);
    \draw[color=black](23)--(25);
    \draw[color=black](24)--(22);
    \draw[color=black](24)--(25);
    \draw[color=black](26)--(25);
    \draw[color=black](23)--(24);
\end{scope}
        \begin{scope}[xshift=1cm]
 \node[Node,label={[label distance=0pt]0: $v_{1}$}] (31)  at (6, 2.5) {};
    \node[Node,label={[label distance=0pt]0: $v_{2}$}] (32)  at (6.5,2 ) {};
    \node[Node,label={[label distance=-2pt,left]0: $v_{3}$}] (33)  at (5.5, 2) {};
    \node[Node,label={[label distance=0pt]0: $v_{4}$}] (34)  at (6.5, 1.5) {};
    \node[Node,label={[label distance=-2pt,left]0: $v_{5}$}] (35)  at (5.5, 1.5) {};
    \node[Node,label={[label distance=0pt]0: $v_{6}$},fill=red] (36)  at (5.5, 1) {};
    \node[label={[label distance=0pt]0: $H_6$}] (37)  at (5.8, 0.8) {};
    \draw[color=black](31)--(32);
    \draw[color=black](31)--(33);
    \draw[color=black](32)--(33);
    \draw[color=black](33)--(35);
    \draw[color=black](34)--(32);
    \draw[color=black](34)--(35);
    \draw[color=black](36)--(35);
    \draw[color=black](32)--(35);
    \end{scope}
\end{tikzpicture}
\caption{Two graphs with $M$-Wronskian vertex}
\end{figure}
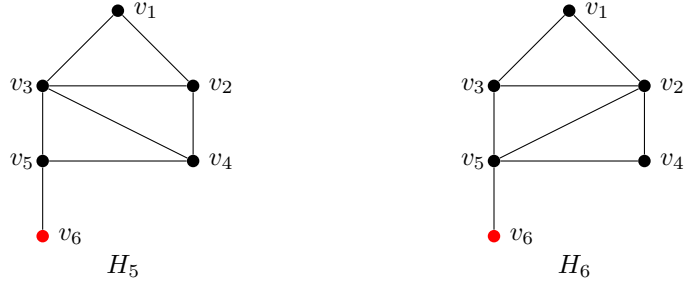

\section{$DMS$-property of $G\circ H$}

In this section, we investigate the $DMS$-property of a rooted product graph and construct infinite pairs of non-isomorphic $M$-cospectral graphs in $\mathcal{G}^M$ based on the results obtained in Sections 3 and 4. By Corollary \ref{thm:spec_rg}, we have the following result.
\begin{lem}\label{lem2}
    Let $G_1$ and $G_2$ be two $M$-cospectral graphs. $H$ is a rooted graph with root vertex $u$. Then
    $G_1\circ H$ and $G_2\circ H$ are $M$-cospectral. 
\end{lem}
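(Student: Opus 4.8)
The plan is to reduce the statement entirely to the spectral formula of Corollary \ref{thm:spec_rg}. The essential observation is that in that formula the multiset $S(\mu)$, consisting of the roots of $\phi(M(H),x)-\mu\phi(M^u(H),x)$, depends only on the scalar $\mu$ and on the two fixed polynomials $\phi(M(H),x)$ and $\phi(M^u(H),x)$ attached to the rooted graph $H$. It carries no further information about the ambient graph into which $H$ is rooted. Consequently, the entire $M$-spectrum of $G\circ H$ is assembled from $\mathrm{Spec}_M(G)$ by a rule that sees $G$ \emph{only} through its $M$-spectrum, and this is exactly the leverage we need.

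First I would apply Corollary \ref{thm:spec_rg} separately to $G_1\circ H$ and $G_2\circ H$, obtaining
$$\mathrm{Spec}_M(G_i\circ H)=\bigcup_{\mu\in \mathrm{Spec}_M(G_i)}S(\mu),\qquad i=1,2,$$
where the unions are understood as multiset unions, so that an eigenvalue $\mu$ of multiplicity $k$ in $\mathrm{Spec}_M(G_i)$ contributes $k$ copies of the multiset $S(\mu)$. Since $G_1$ and $G_2$ are $M$-cospectral, we have $\mathrm{Spec}_M(G_1)=\mathrm{Spec}_M(G_2)$ as multisets; in particular every value $\mu$ occurs with the same multiplicity in both spectra.

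Next, because $S(\mu)$ is determined by $\mu$ and $H$ alone, the same multiset $S(\mu)$ is contributed on both sides for each common eigenvalue $\mu$, with the same number of repetitions. Matching the two multiset unions term by term then gives
$$\mathrm{Spec}_M(G_1\circ H)=\bigcup_{\mu\in \mathrm{Spec}_M(G_1)}S(\mu)=\bigcup_{\mu\in \mathrm{Spec}_M(G_2)}S(\mu)=\mathrm{Spec}_M(G_2\circ H),$$
which is precisely the assertion that $G_1\circ H$ and $G_2\circ H$ are $M$-cospectral.

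There is no genuine obstacle in this argument; the only point demanding care is the bookkeeping of multiplicities. One must interpret the unions in Corollary \ref{thm:spec_rg} as multiset unions and verify that cospectrality is being transported as equality of multisets rather than mere equality of spectra viewed as sets. Once that convention is fixed, the conclusion is immediate.
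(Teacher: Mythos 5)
Your proposal is correct and follows essentially the same route as the paper, which derives the lemma as an immediate consequence of Corollary~\ref{thm:spec_rg} by observing that the multiset $S(\mu)$ depends only on $\mu$ and the fixed rooted graph $H$, so equal spectra of $G_1$ and $G_2$ yield equal multiset unions (the paper spells this computation out explicitly inside the proof of Theorem~\ref{thm3}). Your explicit bookkeeping of multiplicities simply makes precise what the paper leaves implicit.
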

By the construction of $G\circ H$, we can immediately obtain that
\begin{lem}\label{lem3}
    Let $G_1$ and $G_2$ be two $M$-cospectral graphs. $H$ is a rooted graph with root vertex $u$. Then
    $G_1\circ H$ and $G_2\circ H$ are isomorphic if and only if $G_1$ and $G_2$ are isomorphic.
\end{lem}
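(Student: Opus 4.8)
The statement is an ``if and only if,'' so I would prove the two directions separately; the ``if'' direction is a routine verification, while the ``only if'' direction carries the real content. For the backward implication, suppose $\sigma\colon V(G_1)\to V(G_2)$ is a graph isomorphism, and write a vertex of $G_i\circ H$ as a pair $(v,w)$ with $v\in V(G_i)$, $w\in V(H)$, so that the copies of $H$ have root vertices $(v,u)$. I would define $\widetilde\sigma(v,w)=(\sigma(v),w)$, acting by $\sigma$ on the $G$-coordinate and fixing the $H$-coordinate, and then check directly against the adjacency rule of the rooted product that $\widetilde\sigma$ preserves edges of both types: a within-copy edge $(v,w_1)\sim(v,w_2)$ holds iff $w_1\sim_H w_2$, which is untouched; a root-root edge $(v,u)\sim(v',u)$ holds iff $v\sim_{G_1}v'$, hence iff $\sigma(v)\sim_{G_2}\sigma(v')$. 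Equivalently, using $A(G\circ H)=E_{1,1}\otimes A(G)+A(H)\otimes I_n$, a permutation matrix $P$ realizing $\sigma$ satisfies $P A(G_1)P^{\top}=A(G_2)$, and then $I_m\otimes P$ conjugates $A(G_1\circ H)$ to $A(G_2\circ H)$; the same computation works verbatim for any $M\in\{A,L,Q,A_\alpha,U\}$, and in fact the conclusion is purely combinatorial and does not use the cospectrality hypothesis.

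For the forward implication the plan is to show that $G_i$ can be \emph{recovered} from $G_i\circ H$ by a construction that depends only on the abstract isomorphism type, so that any isomorphism $\Phi\colon G_1\circ H\to G_2\circ H$ descends to an isomorphism of the recovered base graphs. Concretely, $G_i\circ H$ is built from $|V(G_i)|$ disjoint copies of $H$ that are glued together only through edges joining their roots according to $G_i$. I would first argue that the vertex set of $G_i\circ H$ partitions canonically into these copies, each containing a distinguished root vertex, and then observe that contracting every copy to its root yields a quotient graph whose root-to-root adjacencies are exactly the edges of $G_i$; thus the quotient is isomorphic to $G_i$. Because this entire recipe (partition into copies, selection of roots, contraction) is phrased in graph-theoretic terms alone, $\Phi$ must carry the copy-partition and root set of $G_1\circ H$ onto those of $G_2\circ H$, inducing a bijection of the two root sets that preserves adjacency, i.e.\ an isomorphism $G_1\cong G_2$.

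The main obstacle is precisely this \emph{canonicity} claim, since the partition into copies must be forced by the graph and not merely by the labelling. Once the root set $R$ is known the copies are easy to see: deleting $R$ leaves exactly $|V(G_i)|$ induced copies of $H-u$ with no edges running between distinct copies, and for connected $H$ every component of $H-u$ touches $u$, so each component of $(G_i\circ H)-R$ attaches to a \emph{unique} root and two such components lie in the same copy iff they share a root-neighbour. Hence the genuine difficulty reduces to characterising the roots graph-theoretically, and I expect the delicate point to be ruling out that a non-root vertex of a highly symmetric $H$ mimics a root. To handle this I would exploit that the relevant $G_i$ are connected with at least two vertices, so each root carries extra incidences coming from the $G_i$-edges that single it out within its copy and make the inter-copy edges the unique ``bridges'' joining the $H-u$ blocks; making this rigorous in full generality may require either a short case analysis on the position of $u$ in $H$ or an explicit appeal to connectivity, and this is the step I would expect to demand the most care.
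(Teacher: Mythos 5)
Your backward direction is complete and is exactly what the paper intends: in fact the paper supplies no written proof at all (the lemma is prefaced only by ``By the construction of $G\circ H$, we can immediately obtain that''), and your map $\widetilde\sigma(v,w)=(\sigma(v),w)$, or equivalently conjugation of $E_{1,1}\otimes M(G)+M(H)\otimes I_n$ by $I_m\otimes P$, is the immediate argument; you are also right that cospectrality plays no role in that direction. The genuine gap is in your forward direction. Reducing it to canonicity of the root set $R$ is the right move, and your observation that, once $R$ is known, the copies are recovered from the components of $(G_i\circ H)-R$ is fine for connected $H$. But you never establish that canonicity, and the one concrete mechanism you offer --- that the inter-copy edges are the unique \emph{bridges} joining the $H-u$ blocks --- is false in both directions. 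If $G$ contains a cycle (say $G=C_3$), the root-root edges lie on a cycle of $G\circ H$ and are not bridges; conversely, if $H$ has cut edges (say $H=P_3$ rooted at an end), every within-copy edge is a bridge. So bridges cannot distinguish the two edge types, and with them goes your proposed graph-theoretic characterization of $R$.

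What is left unproven is precisely the cancellation property for rooted products: that $G_1\circ H\cong G_2\circ H$, with the same rooted graph $(H,u)$ on both sides, forces $G_1\cong G_2$. An abstract isomorphism $\Phi$ has no a priori reason to carry copies to copies; one must show that \emph{every} decomposition of the product into $n$ induced copies of $H$, pairwise joined only by root-to-root edges, yields the same base graph up to isomorphism --- a non-root vertex of a symmetric $H$ mimicking a root is exactly the danger you name, and the ``short case analysis on the position of $u$ in $H$'' you defer is where the entire content lies (your heuristics do settle special cases, e.g. pendant-type $H$, but not the general one). To be fair, the paper is no better off here: it asserts the lemma as immediate from the construction and gives no argument for this direction either. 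But judged as a proof, your proposal is incomplete at this step, and the only concrete idea it offers for closing it does not work.
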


\begin{theorem}\label{thm3} 
Let $H_v^m$ be the rooted graph whose root vertex is $u_m$ for $m\geq 1$. Suppose $G_1$ and $G_2$ are two $M$-cospectral graphs of order $n$. 
If $u_1$ is the $M$-Wronskian vertex of $H_v^1$ and $G_1$, $G_2$ are $M$-separable, then for every $m\geq1$,  $G_1\circ H_v^m$ and $G_2\circ H_v^m$ are $M$-cospectral and $M$-separable.
\end{theorem}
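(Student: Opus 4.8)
The plan is to combine the three results already established: Lemma \ref{lem2} (which handles the cospectrality), Corollary \ref{ncor1} (which propagates the $M$-Wronskian property along a pendant path), and Theorem \ref{nthm2} (which characterizes $M$-separability of a rooted product). The statement bundles these together, so the proof should be short and essentially a matter of checking that each hypothesis needed by these results is available.

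First I would dispose of the $M$-cospectrality claim. Since $G_1$ and $G_2$ are $M$-cospectral by hypothesis, and $H_v^m$ is a rooted graph with root vertex $u_m$, Lemma \ref{lem2} applies directly and yields that $G_1 \circ H_v^m$ and $G_2 \circ H_v^m$ are $M$-cospectral for every $m \geq 1$. This part requires no additional argument.

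Next I would establish $M$-separability, which is the substantive half. The hypothesis says $u_1$ is the $M$-Wronskian vertex of $H_v^1$. Observe that $H_v^m$ is exactly the graph obtained from $H$ by attaching a pendant path of length $m$ at $v$, with $u_m$ its pendant vertex — that is, $H_v^m$ plays the role of ``$G_v^n$'' in Section 4 with $H$ in place of $G$, $v$ the attachment vertex, and $u_m$ the terminal vertex. Thus Corollary \ref{ncor1} applies (with its base case being precisely the assumption that $u_1$ is the $M$-Wronskian vertex of $H_v^1$), giving that $u_m$ is the $M$-Wronskian vertex of $H_v^m$ for every $m \geq 1$. Now for each $i \in \{1,2\}$, $G_i$ is $M$-separable by hypothesis and $u_m$ is the $M$-Wronskian vertex of the rooted graph $H_v^m$; Theorem \ref{nthm2} then yields that $G_i \circ H_v^m$ is $M$-separable. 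Applying this for both $i=1$ and $i=2$ completes the separability claim.

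The only point demanding care — and the place I would expect a referee to look — is the identification of $H_v^m$ with the object $G_v^n$ of Section 4, so that Corollary \ref{ncor1} genuinely applies; this is a matter of matching notation ($H \leftrightarrow G$, pendant vertex $u_m \leftrightarrow u_n$) rather than a genuine obstacle. I would state this correspondence explicitly to make the invocation of Corollary \ref{ncor1} unambiguous. Everything else is a direct citation of the three prior results, so no new computation is needed.
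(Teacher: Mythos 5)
Your proof is correct and takes essentially the same route as the paper's: the paper obtains cospectrality by applying Corollary \ref{thm:spec_rg} directly (of which your citation of Lemma \ref{lem2} is just the packaged form) and obtains separability by propagating the Wronskian vertex via Corollary \ref{ncor1} and then invoking Corollary \ref{cor1}, which is exactly the direction of Theorem \ref{nthm2} you use. The only difference is cosmetic --- citing Theorem \ref{nthm2} rather than Corollary \ref{cor1} --- so nothing substantive changes.
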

\begin{proof}
    Let $\lambda_1,\lambda_2,\cdots,\lambda_n$ be the common $M$-eigenvalues of $G$ and $H$. According to Corollary \ref{thm:spec_rg}, $$\mathrm{Spec}_M(G_1\circ H_v^m)=\bigcup_{i=1}^n S(\lambda_i)=\mathrm{Spec}_M(G_2\circ H_v^m).$$ 
    where $S(\mu)$ is defined as the set of the roots of $\phi(M(H_v^m),x)-\mu \phi(M^{u_m}(H_v^m),x)$, then $G_1\circ H_v^m$ and $G_2\circ H_v^m$ are $M$-cospectral. Since $u_1$ is the $M$-Wronskian vertex of $H_v^1$, by Corollary \ref{ncor1} we have that $u_m$ is the $M$-Wronskian vertex of $H_v^m$. Then by Corollary \ref{cor1} $G_1\circ H_v^m$ and $G_2\circ H_v^m$ are $M$-separable.
\end{proof}

Applying Theorem \ref{thm3}, we can recursively construct infinite pairs of non-isomorphic $M$-cospectral graphs in $\mathcal{G}^M$.
\begin{example}
    $H_7$ and $H_8$ are a pair of $Q$-cospectral $Q$-separable graphs with $Q$-characteristic polynomial 
    $$\phi_Q(H_7,x)=\phi_Q(H_8,x)=x^6 - 16x^5 + 97x^4 - 282x^3 + 404x^2 - 256x + 48.$$
    And we have \begin{align*}
        \phi(Q(H_5),x)&=x^6 - 16x^5 + 96x^4 - 276x^3 + 396x^2 - 262x + 60,\\
        \phi(Q^{v_6}(H_5),x)&=x^5 - 15x^4 + 82x^3 - 206x^2 + 238x - 101.
    \end{align*}
Then $\gcd( \phi(Q(H_5),x), \phi(Q^{v_6}(H_5),x))=1$, by Corollary \ref{ncor1}, we have that $u_n$ is the $Q$-Wronskian vertex of $(H_5)_{v_6}^n$ for $n\geq 1$. Thus, we set $u_n$ as the root vertex of $(H_5)_{v_6}^n$, by Theorem \ref{thm3}, $H_7\circ (H_5)_{v_6}^n$ and $H_8\circ (H_5)_{v_6}^n$ are non-isomorphic $Q$-cospectral graphs in $\mathcal{G}^Q$ for $n\geq 1$.
    
\end{example}

\begin{figure}[h]
\centering
		\begin{tikzpicture}[scale=2,
			Node/.style={fill,circle,scale=.5}
			]
            \begin{scope}
		\node[Node,label={[label distance=-2pt,left]0: $v_{1}$}] (0)  at (1, 2.5) {};
        \node[Node,label={[label distance=0pt]0: $v_{2}$}] (1)  at (2,2.5 ) {};
        \node[Node,label={[label distance=-2pt,left]0: $v_{3}$}] (2)  at (1, 1.75) {};
        \node[Node,label={[label distance=0pt]0: $v_{4}$}] (3)  at (2, 1.75) {};
        \node[Node,label={[label distance=-2pt,left]0: $v_{5}$}] (4)  at (1, 1) {};
        \node[Node,label={[label distance=0pt]0: $v_{6}$}] (5)  at (2, 1) {};
        \node[label={[label distance=0pt]0: $H_7$}] (6)  at (1.3, 0.8) {};
			\draw[color=black](0)--(1);
			\draw[color=black](0)--(2);
			\draw[color=black](0)--(3);
			\draw[color=black](1)--(3);
			\draw[color=black](2)--(4);
			\draw[color=black](2)--(5);
			\draw[color=black](3)--(4);
			\draw[color=black](3)--(5);

\end{scope}
        \begin{scope}[xshift=1cm]
        \node[Node,label={[label distance=-2pt,left]0: $v_{1}$}] (11)  at (3.5, 2.5) {};
        \node[Node,label={[label distance=0pt]45: $v_{2}$}] (12)  at (3.5,2 ) {};
        \node[Node,label={[label distance=-2pt,left]0: $v_{3}$}] (13)  at (3, 2) {};
        \node[Node,label={[label distance=0pt]0: $v_{4}$}] (14)  at (4, 2) {};
        \node[Node,label={[label distance=-2pt,left]0: $v_{5}$}] (15)  at (3.5, 1.5) {};
        \node[Node,label={[label distance=0pt]0: $v_{6}$}] (16)  at (3.5, 1) {};
        \node[label={[label distance=0pt]0: $H_8$}] (17)  at (3.3, 0.8) {};
			\draw[color=black](11)--(12);
			\draw[color=black](11)--(13);
			\draw[color=black](11)--(14);
			\draw[color=black](12)--(13);
			\draw[color=black](12)--(14);
			\draw[color=black](13)--(15);
			\draw[color=black](14)--(15);
			\draw[color=black](16)--(15);  
            \end{scope}
		\end{tikzpicture}
		\caption{A pair of $Q$-cospectral $Q$-separable graphs}
\end{figure}
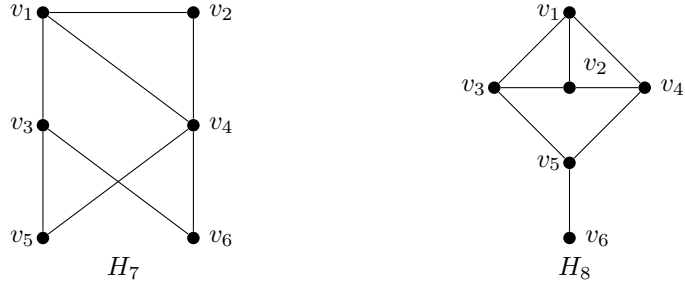

Next, we present two results related to the $DMS$ problem.

\begin{theorem}
    If $G\circ H$ is $DMS$, then $G$ is $DMS$.
\end{theorem}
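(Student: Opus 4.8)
The plan is to argue by contraposition, leveraging Lemmas \ref{lem2} and \ref{lem3}. Suppose $G$ is not $DMS$. Then by definition there is a graph $G'$ with $\mathrm{Spec}_M(G') = \mathrm{Spec}_M(G)$ but $G' \not\cong G$; that is, $G$ admits a non-isomorphic $M$-cospectral mate. My goal is to promote this mate to a non-isomorphic $M$-cospectral mate of $G \circ H$, thereby contradicting the hypothesis that $G \circ H$ is $DMS$.

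First I would apply Lemma \ref{lem2} to the $M$-cospectral pair $G$ and $G'$: since they are $M$-cospectral, the rooted products $G \circ H$ and $G' \circ H$ (formed with the same root vertex $u$ of $H$) are also $M$-cospectral. This produces a concrete candidate cospectral mate $G' \circ H$ for $G \circ H$. Next I would invoke Lemma \ref{lem3}, which asserts that $G_1 \circ H \cong G_2 \circ H$ if and only if $G_1 \cong G_2$. Applying it with $G_1 = G$ and $G_2 = G'$, the assumption $G' \not\cong G$ forces $G' \circ H \not\cong G \circ H$. Combining the two steps, $G' \circ H$ is $M$-cospectral with $G \circ H$ yet not isomorphic to it, which directly contradicts the assumption that $G \circ H$ is $DMS$. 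Hence no such $G'$ can exist, and $G$ is $DMS$.

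I do not expect a serious obstacle here: the statement is essentially a formal consequence of the two structural lemmas already in hand, so the work is bookkeeping rather than new analysis. The only point deserving genuine care is to keep the two roles of the symbol $H$ distinct — the fixed rooted graph appearing in the product versus the generic test graph implicit in the definition of $DMS$ — so that the contradiction is correctly drawn against an \emph{arbitrary} $M$-cospectral mate of $G \circ H$, realized concretely by $G' \circ H$. One should also confirm that $G'$ is permitted to range over all graphs, matching the universal quantifier in the definition of $DMS$, which it does since Lemma \ref{lem2} and Lemma \ref{lem3} impose no restriction on the base graph beyond $M$-cospectrality.
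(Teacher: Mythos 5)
Your proposal is correct and is essentially the paper's own proof: both arguments hinge on exactly Lemma \ref{lem2} (cospectrality of $G$ and $G'$ lifts to $G\circ H$ and $G'\circ H$) and Lemma \ref{lem3} (the isomorphism equivalence), with the only difference being that you phrase it as a contraposition while the paper argues directly from an arbitrary cospectral mate $G_1$ of $G$. The two formulations are logically identical, so no further comparison is needed.
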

\begin{proof}
    Let $G_1$ be any graph such that $\mathrm{Spec}_M(G_1)=\mathrm{Spec}_M(G)$. By Lemma \ref{lem2}, we obtain $\mathrm{Spec}_M(G\circ H)=\mathrm{Spec}_M(G_1\circ H)$, then $G\circ H\cong G_1\circ H$ because $G\circ H$ is $DMS$, hence $G\cong G_1$ by Lemma \ref{lem3}.
\end{proof}

Conversely, if $G$ is $DMS$, we cannot determine whether $G\circ H$ is $DMS$. Regarding this issue, we have a partial result.
\begin{theorem}
     Let $H$ be the rooted graph with root vertex $u$, and $\gcd(\phi(M(H),x),\phi(M^u(H),x))=1$. If $G_1\circ H$ and $G_2\circ H$ are two $M$-cospectral graphs and $G_1$ is $DMS$, then $G_1\circ H\cong G_2\circ H$.
\end{theorem}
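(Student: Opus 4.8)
The plan is to reduce the claim to the statement that $G_1$ and $G_2$ are $M$-cospectral, and then invoke the $DMS$ hypothesis together with Lemma \ref{lem3}. First I would observe that by Lemma \ref{lem3} we have $G_1\circ H\cong G_2\circ H$ precisely when $G_1\cong G_2$; and since $G_1$ is $DMS$, it suffices to establish $\mathrm{Spec}_M(G_1)=\mathrm{Spec}_M(G_2)$ (this also forces $|V(G_1)|=|V(G_2)|$, since $G_1\circ H$ and $G_2\circ H$ are $M$-cospectral and hence have equal orders, so the isomorphism conclusion is meaningful). Thus the whole problem becomes: recover the multiset $\mathrm{Spec}_M(G_i)$ from $\mathrm{Spec}_M(G_i\circ H)$ in a way that depends only on the $M$-spectrum of the product.

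The heart of the argument is the rational map $\rho(x)=\phi(M(H),x)/\phi(M^u(H),x)$, which I claim inverts the construction of $S(\mu)$. By Corollary \ref{thm:spec_rg}, every $M$-eigenvalue $\lambda$ of $G_i\circ H$ lies in $S(\mu)$ for some $\mu\in\mathrm{Spec}_M(G_i)$, i.e. $\phi(M(H),\lambda)-\mu\,\phi(M^u(H),\lambda)=0$. Here the coprimality hypothesis $\gcd(\phi(M(H),x),\phi(M^u(H),x))=1$ is essential and is exactly what makes $\rho$ well defined on the spectrum: if we had $\phi(M^u(H),\lambda)=0$, then the displayed equation would force $\phi(M(H),\lambda)=0$, making $\lambda$ a common root of the two polynomials and contradicting coprimality. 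Hence $\phi(M^u(H),\lambda)\neq 0$ for every eigenvalue $\lambda$ of $G_i\circ H$, so $\rho(\lambda)$ is defined and equals the associated $\mu\in\mathrm{Spec}_M(G_i)$.

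Next I would account for multiplicities. Since $B(\mu)=M(H)+\mu E_{1,1}$ is a real symmetric matrix of order $m=|V(H)|$, the multiset $S(\mu)$ consists of exactly $m$ real eigenvalues, each carried to $\mu$ by $\rho$. Therefore, if $\mu$ occurs with multiplicity $k$ in $\mathrm{Spec}_M(G_i)$, it contributes $k$ copies of $S(\mu)$ to $\mathrm{Spec}_M(G_i\circ H)$ and hence $\mu$ appears exactly $km$ times in the image multiset $\{\rho(\lambda)\mid\lambda\in\mathrm{Spec}_M(G_i\circ H)\}$. In other words this image equals $m\cdot\mathrm{Spec}_M(G_i)$, each eigenvalue of $G_i$ counted $m$ times. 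Applying this for $i=1,2$ and using $\mathrm{Spec}_M(G_1\circ H)=\mathrm{Spec}_M(G_2\circ H)$ gives $m\cdot\mathrm{Spec}_M(G_1)=m\cdot\mathrm{Spec}_M(G_2)$; dividing every multiplicity by $m$ yields $\mathrm{Spec}_M(G_1)=\mathrm{Spec}_M(G_2)$. Since $G_1$ is $DMS$ this forces $G_1\cong G_2$, and Lemma \ref{lem3} then gives $G_1\circ H\cong G_2\circ H$, as required.

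The hard part, and the only place where the hypotheses are genuinely used, is the second paragraph: guaranteeing that no $M$-eigenvalue of $G_i\circ H$ is a zero of $\phi(M^u(H),x)$, so that $\rho$ is a bona fide function on the spectrum—this is precisely the role of $\gcd(\phi(M(H),x),\phi(M^u(H),x))=1$. The remaining work is the multiset bookkeeping of the third paragraph, which is routine once well-definedness of $\rho$ is secured; note in particular that neither $G_1$ nor $G_2$ needs to be $M$-separable for the counting argument to go through, so the theorem holds in this generality.
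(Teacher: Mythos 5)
Your proposal is correct, and it follows the same overall reduction as the paper -- show $\mathrm{Spec}_M(G_1)=\mathrm{Spec}_M(G_2)$, invoke the $DMS$ hypothesis to get $G_1\cong G_2$, and conclude via Lemma \ref{lem3} -- but the mechanism you use for the spectral step is genuinely different and in fact more robust. The paper argues by contradiction: it picks a $\lambda_i\in\mathrm{Spec}_M(G_1)$ with $\lambda_i\neq\lambda_j'$ for all $j$, uses the coprimality hypothesis (as in Lemma \ref{thm1}) to get $S(\lambda_i)\cap S(\lambda_j')=\emptyset$, and concludes by Corollary \ref{thm:spec_rg} that the product spectra differ. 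You instead construct the explicit recovery map $\rho(x)=\phi(M(H),x)/\phi(M^u(H),x)$, show coprimality makes it well defined on every eigenvalue of the product, and do full multiset bookkeeping to obtain that the image of $\mathrm{Spec}_M(G_i\circ H)$ under $\rho$ is exactly $m$ copies of $\mathrm{Spec}_M(G_i)$. This buys you something concrete: the paper's choice of $\lambda_i$ tacitly assumes the two spectra differ as \emph{sets}, but two distinct multisets of equal size can have the same underlying set (e.g.\ $\{1,1,2\}$ versus $\{1,2,2\}$), in which case no such $\lambda_i$ exists and the paper's argument as written needs patching -- precisely the patching your multiplicity count supplies. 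Your closing observations (that $\rho(\lambda)$ is uniquely determined by $\lambda$, so each product eigenvalue lies over a single $\mu$, and that no $M$-separability of $G_1$ or $G_2$ is needed) are both accurate; the paper likewise does not assume separability in this theorem. One cosmetic note: the paper cites Lemma \ref{lem2} for the final implication $G_1\cong G_2\Rightarrow G_1\circ H\cong G_2\circ H$, while you cite Lemma \ref{lem3}; only the trivial direction of Lemma \ref{lem3} is needed, and your usage is the cleaner reference.
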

\begin{proof}
   By Lemma \ref{lem3}, we only need to show that $G_1$ and $G_2$ are $M$-cospectral. Suppose the contrary that $\mathrm{Spec}_M(G_1)\neq \mathrm{Spec}_M(G_2)$. Let $\mathrm{Spec}_M(G_1)=\{\lambda_1, \lambda_2,\cdots,\lambda_n\}$ and $\mathrm{Spec}_M(G_2)=\{\lambda_1', \lambda_2',\cdots,\lambda_n'\}$. Then there exists $\lambda_i\in \mathrm{Spec}_M(G_1)$ such that $\lambda_i\neq \lambda_j'$ for $1\leq j\leq n$. Denote $S(\mu)$ as the set of the roots of $\phi(M(H),x)-\mu \phi(M^u(H),x)$. Since $\gcd(\phi(M(H),x),\phi(M^u(H),x))=1$, we have $S(\lambda_i)\bigcap S(\lambda_j') =\emptyset$ for $1\leq j\leq n$. Then by Corollary \ref{thm:spec_rg}, $\mathrm{Spec}_M(G_1\circ H)\neq \mathrm{Spec}_M(G_2\circ H)$, a contradiction. Therefore, we have $G_1\cong G_2$ by the fact that $G_1$ is $DMS$. Subsequently, by Lemma \ref{lem2}, we can conclude that $G_1\circ H\cong G_2\circ H$.
\end{proof}

\section{$M$-controllable graphs}
Recall that a $M$-controllable graph is a connected graph whose $M$-matrix has distinct
and main eigenvalues. The necessary and sufficient condition for $G\circ H$ to have distinct $M$-eigenvalues is shown in Section 3. We give the necessary and sufficient condition for $G\circ H$ to be $M$-controllable in this section.

\begin{theorem}\label{conthm1}
Let $G$ be a graph of order $n$, and $H$ be a rooted graph of order $m$ with root vertex $u$, where $u$ corresponds to the first row and first column of $M(H)$. Denote $B(\mu) = M(H)+\mu E_{1,1}$. Then, $G\circ H$ is $M$-controllable if and only if $G$ is $M$-controllable, $(\phi(M(H),x),\phi(M^u(H),x)) = 1$ and for any $\mu\in\mathrm{Spec}_M(G)$, $B(\mu)=M(H)+\mu E_{1,1}$ is controllable.
\end{theorem}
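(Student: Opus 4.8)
The plan is to reduce $M$-controllability of $G\circ H$ to its two defining ingredients — distinct eigenvalues and all eigenvalues main — and to let the tensor structure of the eigenvectors from Corollary~\ref{thm:spec_rg} split each ingredient into a ``$G$-part'' and a ``fiber part''. Throughout I would use that, with $C=E_{1,1}$, the matrix is $M(G\circ H)=E_{1,1}\otimes M(G)+M(H)\otimes I_n$, and that its eigenvectors are exactly the $\eta_{ij}\otimes\xi_j$, where $\xi_1,\dots,\xi_n$ is an orthonormal eigenbasis of $M(G)$ (with $M(G)\xi_j=\mu_j\xi_j$) and, for each $j$, the $\eta_{ij}$ form an orthonormal eigenbasis of $B(\mu_j)=M(H)+\mu_j E_{1,1}$.

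First I would dispose of the ``distinct eigenvalues'' half. Since $E_{1,1}$ is a $(0,1)$-diagonal matrix, Theorem~\ref{1thm1} gives that $G\circ H$ is $M$-separable precisely when $G$ is $M$-separable, each $B(\mu)$ (for $\mu\in\mathrm{Spec}_M(G)$) has distinct eigenvalues, and $S(\mu_1)\cap S(\mu_2)=\emptyset$ for distinct $\mu_1,\mu_2$; by Lemma~\ref{thm1} the last condition is exactly $\gcd(\phi(M(H),x),\phi(M^u(H),x))=1$. This is precisely the separability content of the three conditions in the statement, because ``$G$ is $M$-controllable'' and ``$B(\mu)$ is controllable'' each already include having distinct eigenvalues.

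The heart of the argument is the ``main eigenvalue'' half, where the tensor factorization does the work. The all-ones vector of $\mathbb{R}^{mn}$ factors as $\mathbf{1}_m\otimes\mathbf{1}_n$, so for any eigenvector $\eta_{ij}\otimes\xi_j$,
$$\langle \eta_{ij}\otimes\xi_j,\ \mathbf{1}_m\otimes\mathbf{1}_n\rangle=\langle \eta_{ij},\mathbf{1}_m\rangle\cdot\langle\xi_j,\mathbf{1}_n\rangle.$$
Since $G\circ H$ is separable, every eigenspace is one-dimensional, so the eigenvalue carried by $\eta_{ij}\otimes\xi_j$ is main iff this product is nonzero, i.e. iff both factors are nonzero. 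I would then read off the decoupling: demanding $\langle\xi_j,\mathbf{1}_n\rangle\neq0$ for all $j$ is exactly the mainness of every $M(G)$-eigenvalue, hence the controllability of $G$; while demanding $\langle\eta_{ij},\mathbf{1}_m\rangle\neq0$ for all $i$ (with $j$ fixed) is exactly the controllability of $B(\mu_j)$. As mainness must hold for all pairs $(i,j)$ at once, all eigenvalues of $G\circ H$ are main iff $G$ is controllable and $B(\mu)$ is controllable for every $\mu\in\mathrm{Spec}_M(G)$. Combining the two halves yields the stated equivalence.

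The main obstacle I anticipate is the bookkeeping between the two formulations of matrix controllability: ``full rank of $C(M)=(e,Me,\dots,M^{n-1}e)$'' versus ``distinct and all main''. I would verify that for a symmetric matrix these coincide — full rank of $C(M)$ forces $e$ to be a cyclic vector, hence the minimal polynomial equals the characteristic polynomial, which for a symmetric (diagonalizable) matrix means distinct eigenvalues, on top of mainness — so that the separability bookkeeping of the first half and the mainness bookkeeping of the second half assemble correctly into the three conditions. A secondary point to check is connectivity of $G\circ H$, which I would note follows from $G$ being connected (as it is controllable) together with the nondegeneracy of each fiber forced by controllability of $B(\mu)$.
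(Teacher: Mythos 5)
Your proposal is correct and takes essentially the same route as the paper's own proof: the separability half is settled by the Section~3 machinery (the paper invokes Theorem~\ref{nthm2}, you equivalently combine Theorem~\ref{1thm1} with Lemma~\ref{thm1}), and the mainness half rests on the identical tensor factorization $\boldsymbol{1}_{mn}^T(\eta_{ij}\otimes\xi_j)=(\boldsymbol{1}_m^T\eta_{ij})(\boldsymbol{1}_n^T\xi_j)$, which decouples the condition into controllability of $M(G)$ and of each $B(\mu_j)$. Your two auxiliary checks — reconciling the full-rank criterion for $C(M)$ with ``distinct and main'', and the connectivity of $G\circ H$ — are sound refinements of points the paper leaves implicit.
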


\begin{proof}
Let $\mathrm{Spec}_M(G)=\{\mu_1,\mu_2,\cdots,\mu_n\}$, and $\xi_1,\xi_2,\cdots,\xi_n$ form an orthonormal basis of $M(G)$ such that $M(G)\xi_i=\mu_i\xi_i$ for $i = 1,2,\cdots,n$. Let the spectrum of $B(\mu_j)$ be $\mathrm{Spec}(B(\mu_j))=\{\lambda_{1j},\lambda_{2j},\cdots,\lambda_{mj}\}$, and $\eta_{1j},\eta_{2j},\cdots,\eta_{mj}$ form an orthonormal basis of $B(\mu_j)$ such that $B(\mu_j)\eta_{ij}=\lambda_{ij}\eta_{ij}$ for $i = 1,2,\cdots,m$.

If $G\circ H$ is $M$-controllable, then $G\circ H$ is $M$-separable. From Theorem $\ref{nthm2}$, we have $G$ is $M$-separable and $(\phi(M(H),x),\phi(M^u(H),x))=1$. According to Corollary $\ref{thm:spec_rg}$, $\{\eta_{ij}\otimes\xi_j\}$ spans the eigenspace of $M(G\circ H)$. Note that \begin{equation}
    \boldsymbol{1}_{mn}^T(\eta_{ij}\otimes\xi_j)=(\boldsymbol{1}_m^T\otimes\boldsymbol{1}^T_n)(\eta_{ij}\otimes\xi_j)=(\boldsymbol{1}^T_m\eta_{ij})\otimes(\boldsymbol{1}_n^T\xi_j)=(\boldsymbol{1}^T_m\eta_{ij})(\boldsymbol{1}_n^T\xi_j).\label{eq1}
\end{equation}
 From this, by $\eta_{ij}\otimes\xi_j$ is the main eigenvector of $M(G\circ H)$, we have $\eta_{ij}$ is the main eigenvector of $B(\mu_j)$ and $\xi_j$ is the main eigenvector of $M(G)$. Thus, for any $\mu\in\mathrm{Spec}_M(G)$, $B(\mu)=M(H)+\mu E_{1,1}$ is controllable and $G$ is $M$-controllable.

If $G$ is $M$-controllable, $(\phi(M(H),x),\phi(M^u(H),x)) = 1$ and for any $\mu\in\mathrm{Spec}_M(G)$, $B(\mu)=M(H)+\mu E_{1,1}$ is controllable, we have $G\circ H$ is $M$-separable from Theorem $\ref{nthm2}$. And by equation \eqref{eq1}, we can conclude that all the eigenvectors of $M(G\circ H)$ are main eigenvectors, thus $G\circ H$ is $M$-controllable.
\end{proof}

Then we can give a sufficient condition for $G\circ H$ to be $M$-controllable according to Theorem \ref{conthm1}. 
\begin{cor}\label{concor}
    Let $G$ be a graph of order $n$, and $H$ be a rooted graph of order $m$ with root vertex $u$, where $u$ corresponds to the first row and first column of $M(H)$. Denote $B(\mu) = M(H) + \mu E_{1,1} $. Then $G\circ H$ is $M$ controllable if $G$ is $M$ controllable, $(\phi(M(H),x), \phi(M^u(H),x))=1$ and for any $\mu\in \mathbb{R}$, $B(\mu) = M(H) + \mu E_{1,1} $ is controllable.
\end{cor}

From Theorem \ref{conthm1}, the following result is obvious.
\begin{cor}\label{concor1}
    Let $G$ be a graph of order $n$, and $H$ be a rooted graph of order $m$ with root vertex $u$. If $G\circ H$ is $M$-controllable and $G$ has zero eigenvalues, then both $G$ and $H$ are $M$-controllable.
\end{cor}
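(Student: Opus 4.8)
The plan is to read everything off the characterization in Theorem \ref{conthm1}, treating this corollary as the specialization of that theorem to the eigenvalue $\mu=0$. First I would observe that if $G\circ H$ is $M$-controllable, then the three equivalent conditions of Theorem \ref{conthm1} all hold simultaneously: $G$ is $M$-controllable, $(\phi(M(H),x),\phi(M^u(H),x))=1$, and $B(\mu)=M(H)+\mu E_{1,1}$ is controllable for every $\mu\in\mathrm{Spec}_M(G)$. The assertion that $G$ is $M$-controllable is then immediate, so the real content of the corollary is to deduce the $M$-controllability of $H$.

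For $H$, the key idea is that the hypothesis ``$G$ has a zero eigenvalue'' means precisely that $0\in\mathrm{Spec}_M(G)$, which licenses me to instantiate the third condition at $\mu=0$. Since $B(0)=M(H)+0\cdot E_{1,1}=M(H)$, this yields that $M(H)$ itself is controllable, i.e.\ the controllability matrix $C(M(H))=(e,M(H)e,M(H)^2e,\dots)$ has full rank; in particular every eigenvalue of $M(H)$ is main. Next I would invoke the gcd condition $(\phi(M(H),x),\phi(M^u(H),x))=1$: by Theorem \ref{nthm1} this makes $u$ the $M$-Wronskian vertex of $H$, and hence $H$ is $M$-separable, so $M(H)$ has distinct eigenvalues. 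Combining distinctness with the fact that all eigenvalues are main supplies exactly the spectral half of what is needed for $H$ to be $M$-controllable.

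The one point that needs a little care, and the only place I expect a genuine (if minor) obstacle, is the connectivity requirement built into the definition of an $M$-controllable \emph{graph}. I would argue that $G\circ H$ being $M$-controllable forces $G\circ H$ to be connected, and that for a rooted product connectivity of $G\circ H$ forces connectivity of $H$: any component of $H$ not containing the root $u$ would survive as an isolated piece inside each of the $n$ copies of $H$, disconnecting $G\circ H$. Assembling the connectivity of $H$ with the distinct-and-main spectrum of $M(H)$ established above then gives that $H$ is $M$-controllable, completing the argument. Since both $G$ and $H$ have been handled, the corollary follows.
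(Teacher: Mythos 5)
Your proposal is correct and follows exactly the route the paper intends: the paper offers no written proof, stating only that the corollary is ``obvious'' from Theorem \ref{conthm1}, and your argument---instantiating the third condition of that theorem at $\mu=0$ so that $B(0)=M(H)$ is controllable, using the gcd condition with Theorem \ref{nthm1} for distinctness of the $M(H)$-eigenvalues, and checking connectivity of $H$ via the rooted-product structure---is precisely that intended specialization with the implicit details filled in.
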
 
A natural question arises:if $G$ and $H$ are both $M$-controllable, does it necessarily follow that $G \circ H$ is also $M$-controllable? In fact, even in the setting of $A$-controllability, this property does not always hold. For example, consider the graph $G_1(7, 3)$, which is known to be $A$-controllable (see \cite{zbMATH07424199}). If $v_1$ is chosen as the root vertex of $G_1(7, 3)$, the rooted product graph $G_1(7, 3) \circ G_1(7, 3)$ is neither $A$-controllable nor $A$-separable.

Furthermore, even if both $G$ and $H$ are $M$-controllable, and the rooted product graph $G \circ H$ is $M$-separable, this does not guarantee that $G \circ H$ is $M$-controllable. 
A similar situation occurs in the context of $A$-controllability. Consider the rooted product graph $H_9 \circ H_{10}$, where $v_7$ is taken as the root vertex of $H_{10}$. Calculations show that $H_9 \circ H_{10}$ is $A$-separable, but the rank of its walk matrix, $\mathrm{rank}(W(H_9 \circ H_{10}))$, is 48, which is less than the required full rank of 49. This implies that $H_9 \circ H_{10}$ is not $A$-controllable, despite both $H_9$ and $H_{10}$ being $A$-controllable.

On the other hand, if $G\circ H$ is $M$-controllable, we can confirm that $G$ is $M$-controllable from Theorem \ref{conthm1}, and if $G$ has zero eigenvalues, we have $H$ is $M$-controllable from Corollary \ref{concor1}.  But if $G$ has no zero eigenvalues, then whether $H$ is $M$-controllable remains uncertain. For instance, when \(M = A\), if \(G\) is \(A\)-controllable and has no zero eigenvalues, taking a vertex of $P_2$ as the root of \(P_2\) makes \(G\circ P_2\) \(A\)-controllable according to Theorem 4.1 in \cite{zbMATH06680902}. The same holds for \(M = Q\) with Theorem 4.1 in \cite{zbMATH06812655}, and for \(M = A_{\alpha}\) with Theorem 4.2 in \cite{zbMATH07794544}. Therefore, we naturally put forward the following two questions.
\begin{question}
    What kind of graph-theoretic characteristics and conditions should $G$ and $H$ satisfy for the rooted product graph $G\circ H$ to be $M$-controllable?
\end{question}
\begin{question}
  What kind of conditions should $G\circ H $ satisfy for the graphs $G$ and $H$ to be $M$-controllable?
\end{question}

\begin{figure}[H]
\centering
		\begin{tikzpicture}[scale=2,
			Node/.style={fill,circle,scale=.5}
			]

        \begin{scope}
		\node[Node,label={[label distance=0pt]0: $v_{1}$}] (0)  at (1.5, 2.5) {};
        \node[Node,label={[label distance=-2pt,left]0: $v_{2}$}] (1)  at (1,2.0 ) {};
        \node[Node,label={[label distance=0pt]0: $v_{3}$}] (2)  at (1.5, 2.0) {};
        \node[Node,label={[label distance=-2pt,left]0: $v_{4}$}] (3)  at (1, 1.5) {};
        \node[Node,label={[label distance=0pt]0: $v_{5}$}] (4)  at (1.5, 1.5) {};
        \node[Node,label={[label distance=0pt]0: $v_{6}$}] (5)  at (1, 1) {};
        \node[Node,label={[label distance=0pt]0: $v_{7}$}] (6)  at (2, 1.5) {};
        \node[label={[label distance=0pt]0: $H_9$}] (7)  at (1.2, 0.8) {};
			\draw[color=black](0)--(1);
			\draw[color=black](1)--(2);
			\draw[color=black](2)--(3);
			\draw[color=black](3)--(4);
			\draw[color=black](0)--(2);
			\draw[color=black](1)--(3);
			\draw[color=black](2)--(4);
			\draw[color=black](4)--(2);
                \draw[color=black](4)--(5);
                \draw[color=black](3)--(5);
                \draw[color=black](2)--(6);
\end{scope}
        \begin{scope}[xshift=1cm]
        
        \node[Node,label={[label distance=0pt]0: $v_{1}$}] (11)  at (3.5, 2.5) {};
        \node[Node,label={[label distance=-2pt,left]0: $v_{2}$}] (12)  at (3,2.0 ) {};
        \node[Node,label={[label distance=0pt]0: $v_{3}$}] (13)  at (3.5, 2.0) {};
        \node[Node,label={[label distance=-2pt,left]0: $v_{4}$}] (14)  at (3, 1.5) {};
        \node[Node,label={[label distance=0pt]0: $v_{5}$}] (15)  at (3.5, 1.5) {};
        \node[Node,label={[label distance=0pt]0: $v_{6}$}] (16)  at (3, 1) {};
        \node[Node,label={[label distance=-2pt,left]0: $v_{7}$},fill=red] (17)  at (2.5,1.75) {};
        \node[label={[label distance=0pt]0: $H_{10}$}] (18)  at (3.2, 0.8) {};
        \draw[color=black](12)--(11);
        \draw[color=black](13)--(11);
        \draw[color=black](12)--(13);
        \draw[color=black](12)--(17);
        \draw[color=black](12)--(14);
        \draw[color=black](13)--(17);
        \draw[color=black](13)--(15);
        \draw[color=black](14)--(17);
        \draw[color=black](14)--(15);
        \draw[color=black](14)--(16);
    \end{scope}
        \end{tikzpicture}
		\caption{Two $A$-controllable graphs}
\end{figure}
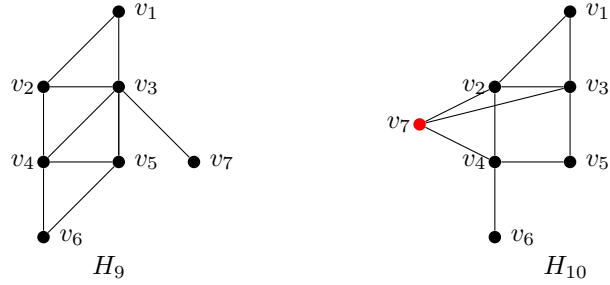

Recall that the notation $\mathcal{G}^A$ represents the set of connected graphs with distinct $A$-eigenvalues. In what follows, we define $\mathcal{G}$ to be the set of connected graphs, $\mathcal{G}^{A^*}$ to be the set of connected $A$-controllable graphs,
$\mathcal{G}^A_W$ as the set of connected graphs with $A$-Wronskian vertex, and $\mathcal{G}^{A^*}_W$ as the set of connected $A$-controllable graphs having $A$-Wronskian vertex, respectively. The following table presents the quantity of such graphs of order not exceeding $9$. Regarding $\vert\mathcal{G}^A\vert$ and $\vert\mathcal{G}^{A^*}\vert$, one can refer to sequences A242952\cite{A242952} and A371897\cite{A371897} in OEIS respectively. Additionally, the values of $|\mathcal{G|,}\vert\mathcal{G}^A_W\vert$ and $\vert\mathcal{G}^{A^*}_W\vert$ were calculated using the Mathematica program.

\begin{table}[H]
  \begin{center}
    \renewcommand{\arraystretch}{1.2} 
    \setlength{\tabcolsep}{6pt} 
    \begin{tabu} to \textwidth {X[1,c]|X[2,c]|X[2,c]|X[2,c]|X[2,c]|X[2,c]} 
      \hline
      \textbf{Order} & \textbf{$|\mathcal{G}|$} & \textbf{$|\mathcal{G}^A|$} & \textbf{$|\mathcal{G}^{A^*}|$}& \textbf{$|\mathcal{G}^A_W|$} & \textbf{$|\mathcal{G}^{A^*}_W|$}\\
      \hline
      1 & 1 & 1 & 1 & 0 & 0\\
      2 & 1 & 1 & 0 & 1 & 0\\
      3 & 2 & 1 & 0 & 1 & 0\\
      4 & 6 & 3 & 0 & 3 & 0\\
      5 & 21 & 11 & 0 & 9 & 0\\
      6 & 112 & 54 & 8 & 37 & 8\\
      7 & 853 & 539 & 85 & 414 & 85\\
      8 & 11117 & 7319 & 2275 & 5984 & 2275\\ 
      9 & 261080 & 209471 & 83034 & 186053 & 83034 \\ 
      \hline
    \end{tabu}
  \end{center}
\end{table}

We have found that when $6\leq n\leq 9$, all the graphs in $\mathcal{G}^{A^*}$ are also in $\mathcal{G}^{A^*}_W$. Therefore, it might also be an interesting problem to study the relationship between $M$-controllability and the possession of $M$-Wronskian vertices.
\bibliographystyle{plain}
\bibliography{reference}

\end{document}